\documentclass[12pt]{amsart}
\usepackage[margin=1.0in]{geometry}
\usepackage{amsfonts,amscd,amssymb,enumerate,enumitem,hyperref,cleveref}
\allowdisplaybreaks


\date{\today}

\vfuzz2pt 
\hfuzz2pt 
\newtheorem{thm}{Theorem}[section]
\newtheorem{cor}[thm]{Corollary}
\newtheorem{lem}[thm]{Lemma}

\newtheorem{prop}[thm]{Proposition}
\theoremstyle{definition}

\theoremstyle{remark}
\newtheorem{rem}[thm]{Remark}

\numberwithin{equation}{section}



\newcommand{\R}{\mathbb R}

\newcommand{\He}{\mathbb H}

\newcommand{\C}{{\mathbb C}}

\renewcommand{\Im}{\operatorname{Im}}
\newcommand{\tr}{\operatorname{tr}}

\usepackage{color}

\title[Twisted Fock spaces]
{Boundedness of certain  linear operators  \\
on twisted Fock spaces}

\author[R. Garg and S. Thangavelu]{Rahul Garg and Sundaram Thangavelu}

\address[R. Garg]{Department of Mathematics, Indian Institute of Science Education and Research Bhopal, Bhopal--462066, India.}
\email{rahulgarg@iiserb.ac.in}

\address[S. Thangavelu]{Department of Mathematics, Indian Institute of Science, Bangalore--560012, India.}
\email{veluma@iisc.ac.in}

\keywords{Weyl transform, Segal-Bargmann transform, twisted Bergman spaces, twisted Fock spaces, uncertainty principle}
\subjclass[2020]{Primary: 30H20. Secondary: 42A38, 42B15, 44A15}

\begin{document}

\maketitle

\begin{abstract} On the twisted Fock spaces $ \mathcal{F}^\lambda(\C^{2n}) $ we consider two types of convolution operators $ S_\varphi $ and $ \widetilde{S}_\varphi $ associated to an element $ \varphi \in  \mathcal{F}^\lambda(\C^{2n}) .$ We find a necessary and sufficient condition on $ \varphi $ so that $ S_\varphi $ (resp. $ \widetilde{S}_\varphi $ ) is bounded on $ \mathcal{F}^\lambda(\C^{2n}).$ We show that for any given non constant $ \varphi $ at least one of these two operators is unbounded.
\end{abstract}
 

\section{Introduction} \label{Sec-intro}
In this article we consider a family of Fock spaces $ \mathcal{F}^\lambda(\C^{2n}) $ indexed by $ \lambda \in \R $ which coincide with the classical Fock space $\mathcal{F}(\C^{2n})$ when $ \lambda =0.$ These spaces are related to the twisted Bergman spaces $ \mathcal{B}_t^\lambda(\C^{2n}) $ studied in \cite{KTX} which appear naturally in connection with the heat kernel transform or the Segal-Bargmann transform on the Heisenberg group $ \He^n.$ Let us consider the weight function
$$ w_\lambda(z,w) = e^{\lambda \Im( z\cdot \bar{w})} \, e^{-\frac{1}{2} \lambda (\coth \lambda) |(z,w)|^2},\,\,\, (z,w) \in \C^{2n}.$$
Then $ \mathcal{F}^\lambda(\C^{2n}) $ is defined as the space of all entire functions $ F $ on $\C^{2n} $ which are square integrable with respect to the measure $ w_\lambda(z,w) \, dz \, dw.$ We equip $ \mathcal{F}^\lambda(\C^{2n}) $ with the norm
$$ \|F \|_{\mathcal{F}^\lambda}^2 = \int_{\C^{2n}} |F(z,w)|^2 \, w_\lambda(z,w) \, dz\, dw. $$ 
Note that when $ \lambda =0, \, w_0(z,w) = e^{-\frac{1}{2}  |(z,w)|^2}$ and hence  $ \mathcal{F}^0(\C^{2n}) = \mathcal{F}(\C^{2n}),$ the standard Fock space of entire functions on $ \C^{2n} $ which are square integrable with respect to the Gaussian measure $ d\nu(z,w) = w_0(z,w) \, dz \, dw = e^{-\frac{1}{2} |(z,w)|^2} \, dz \, dw.$\\

In \cite{CLSWY, CHLS, Thangavelu-arxiv-Fock-Sobolev-2023} the authors have studied  boundedness properties of operators $S_\varphi$ of the form
$$ S_\varphi F(z) = \int_{\C^n} F(w)\, \varphi(z-\bar{w})\, e^{\frac{1}{2} z \cdot \bar{w}} \, e^{-\frac{1}{2}|w|^2} \, dw $$ 
on the Fock spaces $ \mathcal{F}(\C^n)$, for $\varphi \in \mathcal{F}(\C^n)$. They have proved that $ S_\varphi $ is bounded on $ \mathcal{F}(\C^n) $ if and only if $\varphi = G m$ for some $ m \in L^\infty(\R^n).$ Here $ G: L^\infty(\R^n) \rightarrow \mathcal{F}(\C^n) $ is the Gauss-Bargmann transform defined by
$$ Gm(z) = e^{\frac{1}{4}z^2} \, \int_{\R^n} m(\xi) \,e^{i z\cdot \xi}\, e^{-|\xi|^2} \, d\xi.$$
One of the aims of this article is to study analogues of $ S_\varphi $ on the twisted Fock spaces $\mathcal{F}^\lambda(\C^{2n})$, for $\lambda \in \R^\ast= \mathbb{R} \setminus \left\lbrace 0\right\rbrace$. For each $ \varphi \in  \mathcal{F}^\lambda(\C^{2n}) $ we define
\begin{align} \label{def:convolution-operator-tiwsted-fock}
S_\varphi F(z,w) = \int_{\C^{2n}} F(a,b)  \varphi(z-\bar{a}, w-\bar{b}) e^{\frac{1}{2}\lambda (\coth \lambda)(z \cdot \bar{a}+w \cdot \bar{b})} e^{-\frac{i}{2} \lambda (w \cdot \bar{a}- z \cdot \bar{b})} w_\lambda(a,b) \, da \, db. 
\end{align}
In the setting of twisted Fock spaces, the analogue of the Gauss-Bargmann transform $ G $ is given by the map $ G_\lambda: B(L^2(\R^n)) \rightarrow  \mathcal{F}^\lambda(\C^{2n}) $ defined by
\begin{align} \label{def:Gauss-Bargmann-transform-tiwsted-fock}
G_\lambda(M)(z,w) = p_1^\lambda(z,w)^{-1}\, \tr\left( \pi_\lambda(-z,-w) e^{-\frac{1}{2} H(\lambda)} M e^{-\frac{1}{2}H(\lambda)} \right) .
\end{align}
In this article, we consider the boundedness of the operators $ S_\varphi $ on twisted Fock spaces $\mathcal{F}^\lambda(\C^{2n})$ and establish the following result. 

\begin{thm} \label{thm:convolution-operator-twisted-fock}
For any $ \varphi \in \mathcal{F}^\lambda(\C^{2n}) $  the operator $ S_\varphi $ defined in \eqref{def:convolution-operator-tiwsted-fock} is bounded on $\mathcal{F}^\lambda(\C^{2n}) $ if and only if $ \varphi = G_\lambda(M)$ for some $ M \in B(L^2(\R^n)).$
\end{thm}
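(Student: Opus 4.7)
The plan is to transport the entire question to operator theory on $L^2(\R^n)$ via a Weyl-transform-type isomorphism $\mathcal{F}^\lambda(\C^{2n}) \cong \mathcal{S}_2(L^2(\R^n))$, where $\mathcal{S}_2$ denotes the Hilbert-Schmidt class. The formula \eqref{def:Gauss-Bargmann-transform-tiwsted-fock} strongly suggests that if we set $T = e^{-\frac{1}{2}H(\lambda)} M e^{-\frac{1}{2}H(\lambda)}$, then
\[
\varphi(z,w) = p_1^\lambda(z,w)^{-1}\, \tr\!\left( \pi_\lambda(-z,-w) T \right)
\]
expresses a generic $\varphi \in \mathcal{F}^\lambda(\C^{2n})$ as a normalized matrix coefficient of the Schr\"odinger representation $\pi_\lambda$ against some Hilbert-Schmidt $T$, and that $T \mapsto \varphi$ is (up to a constant) an isometric bijection from $\mathcal{S}_2$ onto $\mathcal{F}^\lambda(\C^{2n})$. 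Under this identification, the content of the theorem becomes the assertion that the \emph{symbol} $M := e^{\frac{1}{2}H(\lambda)} T e^{\frac{1}{2}H(\lambda)}$ is bounded on $L^2(\R^n)$ if and only if $S_\varphi$ is bounded on $\mathcal{F}^\lambda(\C^{2n})$.

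The technical heart of the proof is a \emph{composition formula}: for $\varphi = G_\lambda(M)$ and $F = G_\lambda(N)$, one expects
\[
S_\varphi F = G_\lambda(MN).
\]
The precise phase factor $e^{-\frac{i}{2}\lambda(w\cdot\bar a - z\cdot\bar b)}$ and Gaussian weight $e^{\frac{1}{2}\lambda(\coth\lambda)(z\cdot\bar a + w\cdot\bar b)}$ in \eqref{def:convolution-operator-tiwsted-fock} have been engineered so that the integral collapses, via the cocycle identity for $\pi_\lambda$ combined with a Plancherel/Mehler-type computation, into the operator product $MN$ sandwiched between two heat factors. I would verify this identity first on a dense class (for example, finite-rank operators expanded in the Hermite basis), where both sides reduce to explicit Gaussian integrals, and then extend by continuity.

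Granting the composition formula, the sufficiency follows at once from the ideal property of $\mathcal{S}_2$: for $M \in B(L^2(\R^n))$ and $N \in \mathcal{S}_2$, $\|MN\|_{\mathcal{S}_2} \leq \|M\|_{\mathrm{op}}\, \|N\|_{\mathcal{S}_2}$, which transports through the isometry $\mathcal{S}_2 \cong \mathcal{F}^\lambda(\C^{2n})$ to $\|S_\varphi F\|_{\mathcal{F}^\lambda} \leq \|M\|_{\mathrm{op}}\, \|F\|_{\mathcal{F}^\lambda}$. For necessity, a given $\varphi \in \mathcal{F}^\lambda(\C^{2n})$ corresponds to some $T_\varphi \in \mathcal{S}_2$, and we \emph{define} $M := e^{\frac{1}{2}H(\lambda)} T_\varphi e^{\frac{1}{2}H(\lambda)}$ as an a priori densely defined, possibly unbounded operator, so that $\varphi = G_\lambda(M)$ at least formally. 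The composition formula then identifies $S_\varphi$ with left multiplication by $M$ on $\mathcal{S}_2$; testing this multiplier on rank-one operators $u \otimes v$ (which are Hilbert-Schmidt of norm $\|u\|\|v\|$) forces $M$ itself to be bounded on $L^2(\R^n)$, with $\|M\|_{\mathrm{op}} = \|S_\varphi\|_{\mathcal{F}^\lambda \to \mathcal{F}^\lambda}$. The main obstacle I anticipate is the composition formula itself: controlling the holomorphic extension of $\pi_\lambda$ to complex arguments $(z-\bar a, w-\bar b)$, reconciling the three superimposed Gaussians and phases, and justifying the interchange of integration with the trace will require delicate bookkeeping, but once done the remainder of the proof is essentially abstract operator theory.
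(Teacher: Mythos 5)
Your overall strategy---transporting $S_\varphi$ to operator theory on $L^2(\R^n)$ via a Weyl-type correspondence and then using ideal properties of operator classes---is the right spirit, but there is a genuine error in the identification you propose, and it is load-bearing for your sufficiency argument. You claim that $T \mapsto \varphi(z,w) = p_1^\lambda(z,w)^{-1}\tr\bigl(\pi_\lambda(-z,-w) T\bigr)$ is (up to a constant) an isometric bijection from $\mathcal{S}_2$ onto $\mathcal{F}^\lambda(\C^{2n})$. This is not the case. Tracing through $\varphi_0 = \varphi\, p_1^\lambda = B_\lambda f$ for $f \in L^2(\R^{2n})$, one finds that the operator appearing inside the trace is $T = \pi_\lambda(f)\, e^{-\frac{1}{2}H(\lambda)}$, so the correct operator class is $\{T : T e^{\frac{1}{2}H(\lambda)} \in \mathcal{S}_2\}$ with norm $\|T e^{\frac{1}{2}H(\lambda)}\|_{HS}$---equivalently the paper's space $\mathcal{S}(\Gamma_{1/2})$ of operators $N$ with $e^{-\frac{1}{2}H(\lambda)}N \in \mathcal{S}_2$, equipped with the norm $\|e^{-\frac{1}{2}H(\lambda)}N\|_{HS}$. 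In particular, for $T = e^{-\frac{1}{2}H(\lambda)} N e^{-\frac{1}{2}H(\lambda)}$ the isometry is $\|\varphi\|_{\mathcal{F}^\lambda} = \|e^{-\frac{1}{2}H(\lambda)}N\|_{HS}$, \emph{not} $\|T\|_{HS}$. This matters: your sufficiency step reads ``$\|MN\|_{\mathcal{S}_2} \le \|M\|_{\mathrm{op}}\|N\|_{\mathcal{S}_2}$ transports through the isometry,'' but with the (wrong) $\mathcal{S}_2$-norm on the $T$'s this would require controlling $\|e^{-\frac{1}{2}H(\lambda)}NM e^{-\frac{1}{2}H(\lambda)}\|_{HS}$ by $\|M\|_{\mathrm{op}}\|e^{-\frac{1}{2}H(\lambda)}N e^{-\frac{1}{2}H(\lambda)}\|_{HS}$, which fails because conjugation by $e^{\pm\frac{1}{2}H(\lambda)}$ is unbounded. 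With the correct norm the argument does close, provided the composition formula yields \emph{right} multiplication, $G_\lambda^\ast(S_\varphi F) = G_\lambda^\ast(F)\, G_\lambda^\ast(\varphi)$, not left multiplication $MN$ as you wrote; this order is forced by the structure and you should verify it.

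Beyond this, the composition formula itself, which you correctly identify as ``the technical heart,'' is left unproved in your sketch, and it is precisely where all the analytic content lies. The paper sidesteps proving it directly: instead it shows that $S_\varphi$, conjugated to the Bergman side and then transported to $L^2(\R^{2n})$ via $B_\lambda$, commutes with the twisted translations $\tau_\lambda(a,b)$ for real $(a,b)$, and then invokes Mauceri's theorem \cite{MauceriWeylTransformJFA80} that such operators are exactly the right Weyl multipliers $T_M$, bounded on $L^2$ iff $M \in B(L^2(\R^n))$. Applying $T_{\varphi_0}$ to the reproducing kernel then identifies $\varphi = G_\lambda(M)$, and the composition formula is derived \emph{afterwards} as Corollary \ref{cor:convolution-operator-twisted-fock-via-Gauss-Bargmann-transform}. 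This route is cleaner because the classification of twisted-translation-invariant operators replaces the Gaussian bookkeeping you anticipate. Your rank-one testing idea for necessity is a nice shortcut and would work once the composition formula is in place; if you want to pursue your route, fix the isometry class, get the multiplication order right, and then actually carry out the Mehler-type computation you outline, or better, use translation-invariance plus Mauceri to avoid it.
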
 

As a corollary to Theorem \ref{thm:convolution-operator-twisted-fock} we can obtain the following representation formula for the operators $ S_\varphi.$ Let  $ \Gamma_{1/2} = e^{-\frac{1}{2}H(\lambda)} $ be the Hermite semigroup and define $ \mathcal{S}(\Gamma_{1/2}) $ to be  the space of all operators $ T $ on $ L^2(\R^n) $ such that $ e^{-\frac{1}{2}H(\lambda)} T$ is Hilbert-Schmidt. We equip this space with the norm $ \|T\| = \left\| e^{-\frac{1}{2}H(\lambda)} T \right\|_{HS}.$ Then  $ G_\lambda $ has a natural extension to  $ \mathcal{S}(\Gamma_{1/2}) $ as an isometry onto  $ \mathcal{F}^\lambda(\C^{2n}).$ Let $ G_\lambda^\ast $ denote the adjoint of $ G_\lambda.$

\begin{cor} \label{cor:convolution-operator-twisted-fock-via-Gauss-Bargmann-transform} 
For any bounded linear operator $ S_\varphi $ on the twisted Fock space $\mathcal{F}^\lambda(\C^{2n})$ let $ M $ be the associated operator as in Theorem \ref{thm:convolution-operator-twisted-fock}. Then
$ S_\varphi F = G_\lambda \left( G_\lambda^\ast F  \circ M\right).$
\end{cor}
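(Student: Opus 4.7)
The strategy is to recast the corollary as an intertwining identity for the Gauss--Bargmann transform $G_\lambda$ and the convolution operator $S_\varphi$. Since $G_\lambda$ extends to an isometric bijection $\mathcal{S}(\Gamma_{1/2}) \to \mathcal{F}^\lambda(\C^{2n})$, every input $F \in \mathcal{F}^\lambda(\C^{2n})$ has the unique representation $F = G_\lambda(T)$ with $T = G_\lambda^\ast F \in \mathcal{S}(\Gamma_{1/2})$. The conclusion of the corollary is then equivalent to the statement
\[
S_{G_\lambda(M)}\bigl( G_\lambda(T) \bigr) \;=\; G_\lambda(T \circ M)
\]
for every $T \in \mathcal{S}(\Gamma_{1/2})$ and every $M \in B(L^2(\R^n))$. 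This is the identity I would prove.

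First, I would substitute the trace formula \eqref{def:Gauss-Bargmann-transform-tiwsted-fock} for both factors $\varphi(z-\bar a, w-\bar b) = G_\lambda(M)(z-\bar a, w-\bar b)$ and $F(a,b) = G_\lambda(T)(a,b)$ into the definition \eqref{def:convolution-operator-tiwsted-fock} of $S_\varphi F(z,w)$. This yields a product of two traces under the integral over $(a,b) \in \C^{2n}$, together with the factor $e^{\frac{1}{2}\lambda(\coth\lambda)(z\cdot\bar a + w\cdot\bar b)} e^{-\frac{i}{2}\lambda(w\cdot\bar a - z\cdot\bar b)}$ and the reciprocal kernels $p_1^\lambda(z-\bar a, w-\bar b)^{-1}$ and $p_1^\lambda(a,b)^{-1}$. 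Next I would invoke the (analytically continued) quasi-multiplicativity of the Schrödinger representation, schematically
\[
\pi_\lambda(-(z-\bar a),-(w-\bar b)) \;=\; c(z,w;a,b)\, \pi_\lambda(-z,-w)\, \pi_\lambda(\bar a, \bar b),
\]
where the cocycle $c(z,w;a,b)$ is designed to absorb precisely the exponential phase and Gaussian prefactors appearing in \eqref{def:convolution-operator-tiwsted-fock} together with the kernel $p_1^\lambda(z-\bar a, w-\bar b)^{-1}$. After this rearrangement, the integrand takes the form of a trace of an operator product with a single $\pi_\lambda(-z,-w)$ on the left and an isolated $\pi_\lambda(\bar a, \bar b)$ sandwiched between the two copies of $e^{-\frac{1}{2}H(\lambda)} M e^{-\frac{1}{2}H(\lambda)}$ and $e^{-\frac{1}{2}H(\lambda)} T e^{-\frac{1}{2}H(\lambda)}$.

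The final step is to carry out the $(a,b)$-integration and identify the result with $G_\lambda(T\circ M)(z,w)$. Expanding the internal trace in an orthonormal basis (say the scaled Hermite basis that diagonalises $H(\lambda)$) and interchanging sum and integral, the integral
\[
\int_{\C^{2n}} \pi_\lambda(\bar a, \bar b)\, w_\lambda(a,b)\, p_1^\lambda(a,b)^{-1}\, da\, db
\]
reduces---by the orthogonality of the matrix coefficients of $\pi_\lambda$, which is exactly the Plancherel identity underlying the fact that $G_\lambda$ is an isometry---to the identity operator, and merges the four Hermite semigroup factors into $e^{-\frac{1}{2}H(\lambda)} (T \circ M) e^{-\frac{1}{2}H(\lambda)}$. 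What remains is $p_1^\lambda(z,w)^{-1} \tr\bigl(\pi_\lambda(-z,-w) e^{-\frac{1}{2}H(\lambda)} (T\circ M) e^{-\frac{1}{2}H(\lambda)}\bigr) = G_\lambda(T\circ M)(z,w)$, as desired.

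The main obstacle will be the cocycle identification in the middle step: one must check that the precise combination of the Gaussian factor $e^{\frac{1}{2}\lambda(\coth\lambda)(z\cdot\bar a + w\cdot\bar b)}$, the phase $e^{-\frac{i}{2}\lambda(w\cdot\bar a - z\cdot\bar b)}$, the weight $w_\lambda$, and the reproducing kernel ratios in \eqref{def:convolution-operator-tiwsted-fock} matches exactly the cocycle produced by the composition law of the holomorphically extended $\pi_\lambda$, leaving no spurious factors. This is a fairly heavy bookkeeping computation, but since Theorem \ref{thm:convolution-operator-twisted-fock} characterises the bounded $S_\varphi$'s exactly via $\varphi = G_\lambda(M)$, the identity $S_{G_\lambda(M)} G_\lambda(T) = G_\lambda(T \circ M)$ ought to appear, at least on a dense class of $T$, inside the proof of that theorem; the corollary then follows by continuity using $\|T\circ M\|_{\mathcal{S}(\Gamma_{1/2})} \le \|M\|_{B(L^2)} \|T\|_{\mathcal{S}(\Gamma_{1/2})}$ and the isometry property of $G_\lambda$.
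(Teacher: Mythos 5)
Your proposal takes a genuinely different route from the paper, and in fact a considerably heavier one. The paper's proof is a short conjugation argument: writing $S_\varphi F(z,w) = p_1^\lambda(z,w)^{-1}T_{\varphi_0}F_0(z,w)$ with $F_0 = F\,p_1^\lambda$, one uses the factorisation $T_{\varphi_0} = B_\lambda \circ T_M \circ B_\lambda^\ast$ (already established in the proof of Theorem~\ref{thm:convolution-operator-twisted-fock}) together with the Weyl-multiplier identity $\pi_\lambda(T_M f) = \pi_\lambda(f)\,M$ and the formula $G_\lambda^\ast(F) = e^{\frac{1}{2}H(\lambda)}\pi_\lambda(F_0)e^{\frac{1}{2}H(\lambda)}$ from Remark~\ref{rem:adjoint-operator-G-lambda}; the desired relation $G_\lambda^\ast(S_\varphi F) = (G_\lambda^\ast F)\circ M$ then falls out in one line, with no integral bookkeeping at all. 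You instead propose to expand the kernel of $S_\varphi$ directly using the trace formula for $G_\lambda$, use the cocycle of the (holomorphically extended) projective representation $\pi_\lambda$ to pull a factor $\pi_\lambda(\bar a, \bar b)$ out, and then evaluate the $(a,b)$-integral. The cocycle step is correct as far as it goes: expanding $p_1^\lambda(z-\bar a, w-\bar b)^{-1}$ produces exactly the cross-factor that cancels the Gaussian $e^{\frac12\lambda\coth\lambda(z\cdot\bar a + w\cdot\bar b)}$, and the cocycle phase $e^{\pm\frac{i}{2}\lambda(w\cdot\bar a - z\cdot\bar b)}$ cancels the phase in the $S_\varphi$ kernel.

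The weak spot is your $(a,b)$-integration. After the cocycle rearrangement, the remaining integral has the form
\[
\int_{\C^{2n}} \tr\!\bigl(\pi_\lambda(-z,-w)\pi_\lambda(\bar a,\bar b)\,e^{-\frac12 H(\lambda)}Me^{-\frac12 H(\lambda)}\bigr)\,\tr\!\bigl(\pi_\lambda(-a,-b)\,e^{-\frac12 H(\lambda)}Te^{-\frac12 H(\lambda)}\bigr)\,\mu(a,b)\,da\,db,
\]
and you claim this collapses, by orthogonality of matrix coefficients (the Plancherel identity behind the isometry of $G_\lambda$), to $\tr\bigl(\pi_\lambda(-z,-w)e^{-\frac12 H(\lambda)}TMe^{-\frac12 H(\lambda)}\bigr)$. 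But the plain reproducing/Plancherel identity for $\mathcal{B}^\lambda$ only gives $\tr(\pi_\lambda(-z,-w)Ae^{-\frac12 H(\lambda)}) = \int\tr(\pi_\lambda(-a,-b)Ae^{-\frac12 H(\lambda)})\cdot\tr(\pi_\lambda(-z,-w)\pi_\lambda(\bar a,\bar b)e^{-H(\lambda)})\cdot W(a,b)\,da\,db$; the middle trace has $e^{-H(\lambda)}$, not $e^{-\frac12 H(\lambda)}Me^{-\frac12 H(\lambda)}$, so your integral is \emph{not} of the form to which reproducing orthogonality directly applies. What you actually need is the stronger twisted-convolution identity expressing that this integral contracts the two traces while threading $M$ through between $T$ and the outer semigroup factor — equivalently, that $B_\lambda(g)\mapsto B_\lambda(g\ast_\lambda\Lambda)$ is realised on the Bergman space by the kernel $p_1^\lambda(z-\bar a, w-\bar b)^{-1}\tr(\cdots Me^{-\frac12 H(\lambda)})$. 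This is precisely what the proof of Theorem~\ref{thm:convolution-operator-twisted-fock} establishes by acting $T_{\varphi_0}$ on the reproducing kernel, so your suspicion that ``the identity ought to appear inside the proof of that theorem'' is correct, but simply invoking orthogonality of matrix coefficients undersells it. If you carry out the contraction carefully you will recover the paper's result; alternatively, notice that once you have written $S_\varphi F = p_1^{-1}T_{\varphi_0}(Fp_1)$ and $T_{\varphi_0} = B_\lambda T_M B_\lambda^\ast$, the corollary is immediate, and that is exactly the shortcut the paper exploits. Your continuity argument at the end, using $\|T\circ M\|_{\mathcal{S}(\Gamma_{1/2})}\le\|M\|_{\mathrm{Op}}\|T\|_{\mathcal{S}(\Gamma_{1/2})}$ and the isometry of $G_\lambda$, is fine.
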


Corollary \ref{cor:convolution-operator-twisted-fock-via-Gauss-Bargmann-transform} suggests that when $ \varphi $ and $ \psi $ belong to the image of $ B(L^2(\R^n)) $ under $ G_\lambda $, we use the notation $ \varphi \ast_\lambda \psi = S_\psi \varphi.$ Then we can rewrite the result of the corollary as $ G_\lambda^\ast(\varphi \ast_\lambda \psi) = G_\lambda^\ast(\varphi) \, G_\lambda^\ast(\psi).$ Under this convolution the subspace $ \mathcal{A}^\lambda(\C^{2n}) = G_\lambda(B(L^2(\R^n))) \subset \mathcal{F}^\lambda(\C^{2n}) $ becomes a Banach algebra which is non-commutative. We shall come back to this discussion in Section \ref{subsection-algebra-entire-functions}. \\

Along with the operators $ S_\varphi$ we also consider the operators $ \widetilde{S}_\varphi $ on  $\mathcal{F}^\lambda(\C^{2n})$  defined by
\begin{align} \label{def:tilde-convolution-operator-tiwsted-fock}
\widetilde{S}_\varphi F(z,w) = \int_{\C^{2n}} F(a,b)  \varphi(z+\bar{a}, w+\bar{b}) e^{\frac{1}{2}\lambda (\coth \lambda)(z \cdot \bar{a}+w \cdot \bar{b})} e^{-\frac{i}{2} \lambda (w \cdot \bar{a}- z \cdot \bar{b})} w_\lambda(a,b) \, da \,db . 
\end{align} 
As in the classical setting, these two classes of operators are related via the unitary operator $ U $ defined by $ UF(z,w) = F(-iz,-iw) $ on $ \mathcal{F}^\lambda(\C^{2n}).$ Indeed, it is easy to see from \eqref{def:Gauss-Bargmann-transform-tiwsted-fock} and \eqref{def:tilde-convolution-operator-tiwsted-fock} that 
\begin{align} \label{relation:two-convolutions-operators} 
U^\ast \circ S_\varphi \circ U = \widetilde{S}_{U^\ast \varphi}. 
\end{align} 
Consider the following family of operators $ \rho_\lambda(a,b)$ acting on $ \mathcal{F}^\lambda(\C^{2n}) $ as follows:
\begin{align} \label{def:operators-rho-lambda}
\rho_\lambda(a,b)F(z,w) = e^{-i\frac{\lambda}{2}( w\cdot \bar{a}- z \cdot \bar{b})}\, e^{\frac{1}{2}\lambda (\coth \lambda)( z\cdot \bar{a}+w \cdot \bar{b})} \, F(z-a,w-b), 
\end{align} 
for $(a,b) \in \C^{2n}$. It turns out that the operators $ S_\varphi$ commute with $ \rho_\lambda(a,b)$ for all $ (a, b)\in \R^{2n}.$ Likewise, in view of the relation \eqref{relation:two-convolutions-operators}, the operators $ \widetilde{S}_\varphi $ commute with $ U^\ast \circ \rho_\lambda(a,b) \circ U$ for all $ (a,b) \in \R^{2n}.$ An easy calculation shows that for $ (a, b) \in \R^{2n},  U^\ast \circ \rho_\lambda(a,b) \circ U = \rho_\lambda(-ia,-ib) $ and hence $ \widetilde{S}_\varphi $ commutes with $\rho_\lambda(ia,ib)$ for all $ (a, b) \in \R^{2n}.$ More importantly, it can be shown that if a bounded linear operator $S$ on the twisted Fock space $ \mathcal{F}^\lambda(\C^{2n}) $  commutes with $ \rho_\lambda(a,b)$ for all $ (a, b)\in \R^{2n},$ then $S = S_\varphi$ for some $\varphi \in \mathcal{F}^\lambda(\C^{2n})$. Analogous fact holds true for bounded linear operators on $ \mathcal{F}^\lambda(\C^{2n}) $ which commute with $ \rho_\lambda(ia, ib)$ for all $ (a, b)\in \R^{2n} $. We shall come back to this fact later, see \Cref{prop:converse-statement-bdd-linear-op-twisted-fock-space-commuting-with-twisted-translation}. \\

As in the classical setting (see \cite{Thangavelu-arxiv-Fock-Sobolev-2023}), there is an uncertainty regarding the simultaneous boundedness of $ S_\varphi $ and $ \widetilde{S}_\varphi.$ 

\begin{thm} \label{thm:uncertainty-twisted-fock-spaces}
For any non constant $ \varphi \in \mathcal{F}^\lambda (\C^{2n})$ at least one of the operators $ S_\varphi $ and $ \widetilde{S}_\varphi $ fails to be bounded on $ \mathcal{F}^\lambda(\C^{2n}).$
\end{thm}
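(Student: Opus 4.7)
The plan is to use Theorem \ref{thm:convolution-operator-twisted-fock} together with the intertwining relation \eqref{relation:two-convolutions-operators} to reduce the statement to an operator-theoretic uncertainty principle on $B(L^2(\R^n))$. I argue by contradiction: suppose both $S_\varphi$ and $\widetilde{S}_\varphi$ are bounded on $\mathcal{F}^\lambda(\C^{2n})$. Theorem \ref{thm:convolution-operator-twisted-fock} gives $\varphi = G_\lambda(M)$ for some $M \in B(L^2(\R^n))$. Replacing $\varphi$ by $U\varphi$ in \eqref{relation:two-convolutions-operators} yields $\widetilde{S}_\varphi = U^\ast \circ S_{U\varphi} \circ U$, so boundedness of $\widetilde{S}_\varphi$ is equivalent to boundedness of $S_{U\varphi}$; a second application of Theorem \ref{thm:convolution-operator-twisted-fock} then produces $N \in B(L^2(\R^n))$ with $U\varphi = G_\lambda(N)$.

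The crucial step is to translate the identity $U G_\lambda(M) = G_\lambda(N)$ into an analytic identity relating $M$ and $N$. Substituting $(z,w) \mapsto (-iz,-iw)$ in \eqref{def:Gauss-Bargmann-transform-tiwsted-fock} and matching against the trace formula for $G_\lambda(N)(z,w)$ gives
\[ \frac{p_1^\lambda(z,w)}{p_1^\lambda(-iz,-iw)}\, \tr\!\left( \pi_\lambda(iz,iw)\, e^{-\frac{1}{2} H(\lambda)}\, M\, e^{-\frac{1}{2}H(\lambda)} \right) = \tr\!\left( \pi_\lambda(-z,-w)\, e^{-\frac{1}{2} H(\lambda)}\, N\, e^{-\frac{1}{2}H(\lambda)} \right). \]
Unpacking $\pi_\lambda(iz,iw)$ via the analytic continuation of the Schr\"odinger representation expresses $e^{-H(\lambda)/2}\, N\, e^{-H(\lambda)/2}$ as a metaplectic $\mu$-conjugate of $e^{H(\lambda)/2}\, M\, e^{H(\lambda)/2}$, where $\mu$ is the unitary on $L^2(\R^n)$ that implements the quarter-rotation of phase space. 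Since $e^{H(\lambda)/2}$ is unbounded, this identity places a strong analytic regularity condition on $M$.

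The contradiction then comes from expanding $M$ in the special-Hermite basis. Boundedness of $M$ on $L^2(\R^n)$ gives a uniform bound on its matrix entries, while boundedness of $N$ combined with the two unbounded $e^{H(\lambda)/2}$ factors forces all but the scalar-multiple-of-identity component of $M$ to disappear. Since $G_\lambda$ is an isometry on $\mathcal{S}(\Gamma_{1/2})$ and $G_\lambda(I)$ is a constant function (a direct evaluation of \eqref{def:Gauss-Bargmann-transform-tiwsted-fock} at $M = I$ reduces to the trace of $\pi_\lambda(-z,-w)\, e^{-H(\lambda)}$ divided by $p_1^\lambda$), this forces $\varphi$ itself to be constant, contradicting the hypothesis. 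The main obstacle is the middle paragraph: pinning down the precise form of the intertwining between $M$ and $N$ requires a careful computation of $\pi_\lambda$ at complex arguments and careful bookkeeping of the prefactors $p_1^\lambda(-iz,-iw)/p_1^\lambda(z,w)$; once this identity is established, the final Hermite-entry argument proceeds along the lines of the classical Fock-space analysis in \cite{Thangavelu-arxiv-Fock-Sobolev-2023}.
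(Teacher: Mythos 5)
Your setup is fine: the reduction via Theorem \ref{thm:convolution-operator-twisted-fock} and \eqref{relation:two-convolutions-operators} to the situation where $\varphi = G_\lambda(M)$ and $U\varphi = G_\lambda(N)$ with $M,N$ bounded, and the observation that constancy of $\varphi$ corresponds to $M$ being a scalar multiple of the identity (since $G_\lambda(I)$ is constant because $p_1^\lambda$ is even), all match the paper. From there, however, the proposal diverges from what the paper actually does and the two crucial middle steps have genuine gaps.

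First, the claimed intertwining. You assert that analytically continuing $\pi_\lambda$ to the arguments $(iz,iw)$ and absorbing the $p_1^\lambda$ prefactors ``expresses $e^{-H(\lambda)/2}\,N\,e^{-H(\lambda)/2}$ as a metaplectic $\mu$-conjugate of $e^{H(\lambda)/2}\,M\,e^{H(\lambda)/2}$.'' This cannot be right as stated. For $M=I$ the putative right-hand side would be $e^{H(\lambda)}$, which is not even a densely defined bounded operator, let alone trace class; yet the left-hand side is always trace class for bounded $N$. More to the point, the quarter-turn $(z,w)\mapsto(-iz,-iw)$ on $\C^{2n}$ does \emph{not} descend to conjugation by a metaplectic unitary: on the $L^2(\R^{2n})$ side it is implemented by the operator $U_\lambda$ of Proposition \ref{prop:intertwining-operator-U-t-lambda}, which is (a scaled) Euclidean Fourier transform on $\R^{2n}$. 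In Weyl-transform language the Fourier transform on $\C^n$ corresponds to the Weyl correspondence \eqref{def:Weyl-correspondence}, a fundamentally nonlocal operation on operators; it is not conjugation by any unitary. So the pivot you need --- an explicit operator identity relating $M$ and $N$ --- is not established, and the version you wrote down is false.

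Second, the ``Hermite-entry'' endgame is not an argument. You claim that boundedness of $M$ and $N$ ``forces all but the scalar-multiple-of-identity component of $M$ to disappear,'' but you give no mechanism. In the paper the mechanism is Hardy's uncertainty theorem for the Weyl transform (Theorem \ref{thm:Hardy-thm-Weyl-transform}): one forms $g = U_\lambda f \ast_\lambda p_{1/2}^\lambda$, proves the pointwise Gaussian bound $|g|\le Cp_1^\lambda$ from boundedness of $M_1$, proves the operator bound $\pi_\lambda(g)^\ast\pi_\lambda(g)\le C H(\lambda)^r e^{-2H(\lambda)}$, and then invokes Hardy. You appeal to ``the classical Fock-space analysis in \cite{Thangavelu-arxiv-Fock-Sobolev-2023}'' for this step, but that analysis \emph{is} Hardy's theorem (for the Euclidean Fourier transform); you have not replaced it, only hidden it.

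Third, and most seriously, you miss the obstruction that makes the twisted case genuinely harder than the classical one, and which occupies most of Section \ref{Sec-proof-main-results}: the operator $M_2 = G_\lambda^\ast(U\varphi)$ need not commute with $H(\lambda)$, so the factorisation $\pi_\lambda(g) = e^{-\frac12 H(\lambda)} M_2\, e^{-\frac12 H(\lambda)}$ does not directly yield the required bound $\pi_\lambda(g)^\ast\pi_\lambda(g)\lesssim H(\lambda)^r e^{-2H(\lambda)}$. The paper overcomes this by decomposing $\varphi = \sum_\delta \varphi_\delta$ over class-one representations $\delta$ of $U(n)$, showing each $S_{\varphi_\delta}$, $\widetilde S_{\varphi_\delta}$ inherits boundedness, and then using Geller's operator-valued spherical harmonics to write $M_2^\delta = \sum_j W_\lambda(P_j^\delta)\,m_j(H(\lambda))$; the required estimate is then obtained from the explicit action of $W_\lambda(P^\delta)$ as a word in creation and annihilation operators (and Douglas's range-inclusion theorem). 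None of this machinery appears in your proposal, and without it there is no way to run Hardy's theorem --- or any substitute for it --- beyond the radial case where $M_2$ does commute with $H(\lambda)$.
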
 

The analogue of this theorem in the classical setting is proved by appealing to Hardy's theorem for the Fourier transform on $ \R^n,$ see \cite{Thangavelu-arxiv-Fock-Sobolev-2023} for the proof.  In a similar vein, the above theorem is proved by means of Hardy's theorem for the Weyl transform. 
As we have $ U^\ast \circ S_{U \varphi} \circ U = \widetilde{S}_{ \varphi},$ the above theorem can be viewed as the following statement  about the subspace $ \mathcal{A}^\lambda(\C^{2n}) = G_\lambda(B(L^2(\R^n))) \subset \mathcal{F}^\lambda(\C^{2n}) $ under the action of $ U$: if $ \varphi \in \mathcal{A}^\lambda(\C^{2n}),$ then $ U\varphi $ never belongs to $ \mathcal{A}^\lambda(\C^{2n}) $ unless $\varphi$  is a constant. \\ 

The paper is organised as follows. In Section \ref{Sec-prelim} we recall preliminaries which are relevant to this work. In Section \ref{Sec-Weyl-multipliers-Fock-spaces} we establish Theorem \ref{thm:convolution-operator-twisted-fock} which talks about the boundedness of operators $ S_\varphi $ on twisted Fock spaces $\mathcal{F}^\lambda(\C^{2n})$. Finally, making use of an analogue of Hardy's theorem for the Weyl transform \cite[Theorem 2.9.5]{Book-Thangavelu-uncertainty}, we develop the proof of Theorem \ref{thm:uncertainty-twisted-fock-spaces} in Section \ref{Sec-proof-main-results}.


\section{ Preliminaries} \label{Sec-prelim}

\subsection{Schr\"odinger representations of the Heisenberg group} We begin with recalling some basic representation theory of the Heisenberg group which provide the background for the Weyl transform and the twisted Bergman spaces. For more details on the material in  this section we refer to \cite{Book-Folland-phase-space, Book-Thangavelu-uncertainty}.  Let us denote by $\mathbb{H}^{n} =: \C^n \times \R $ the $(2n+1)$ dimensional Heisenberg group with the group law
$$(z,t)(w,s) = \left( z+w, t+s+ \frac{1}{2} \Im (z \cdot \bar{w}) \right)$$ 
for $(z,t), (w,s) \in \C^n \times \R$. 
It turns out that $\mathbb H^n$ is a unimodular group whose Haar measure is just the Lebesgue measure $dz \, dt$ of $\mathbb{C}^{n} \times \mathbb{R}$.   In order to define the Fourier transform on $\mathbb{H}^{n} $, and  the closely related Weyl transform,  we need to recall the unitary dual of the Heisenberg group.\\

The representation theory of the Heisenberg group is very easy to describe. There are two families of irreducible unitary representations for $  \He^n $ of which only the infinite dimensional ones contribute to the Plancherel measure. For every $\lambda \in \R^\ast= \mathbb{R} \setminus \left\lbrace 0\right\rbrace$, consider the map  $\pi_{\lambda} : \mathbb H^n \to U(L^2(\R^n)),$ the space of all unitary operators on $ L^2(\R^n),$ defined by the prescription
\begin{align} \label{def:schodinger-rep}
\pi_{\lambda}(z,t) \phi(\xi) = e^{i \lambda t} e^{i \lambda (x\cdot\xi+ \frac{1}{2}x\cdot y)} \phi(\xi + y), 
\end{align}
where $\phi\in L^{2}(\mathbb{R}^{n})$ and $z=x+ iy$. 
It is known that $\pi_{\lambda}$ is an irreducible unitary representation of $\mathbb{H}^{n}$, called the Schr\"odinger representation. Thanks to a theorem of Stone-von Neumann (see \cite{Book-Folland-phase-space}), it is known that any irreducible unitary representation of $\mathbb{H}^{n}$ which is nontrivial at the centre of $ \He^n$ is unitarily equivalent to exactly one of the $\pi_{\lambda}$'s.\\

Next we recall some facts concerning metaplectic representations that will be used in this work. Let $U(n)$ be the group of $n \times n$ unitary matrices with entries from complex field. It follows from the group law of $\mathbb H^n$ that for each $\sigma \in U(n)$, the map $(z,t) \to (\sigma z,t)$ is an automorphism of $\mathbb H^n$. With $\pi_{\lambda}$ denoting the Schr\"odinger representation as in \eqref{def:schodinger-rep}, one can easily verify that $(z,t) \to \pi_{\lambda} (\sigma z,t)$ is also an irreducible unitary representation of $\mathbb H^n$ which agrees with $\pi_{\lambda}$ at the centre $\C^{n} \times \{0\}$ of $\mathbb H^n$. Therefore, it follows from the theorem of Stone-von Neumann that there is a unitary operator $\mu_\lambda (\sigma)$ on $L^2(\R^n)$ such that 
\begin{align} \label{relation:metaplectic-rep-Schorodinger-rep}
\pi_{\lambda} (\sigma z, t) = \mu_\lambda (\sigma) \pi_\lambda (z,t) \mu_\lambda (\sigma)^*. 
\end{align} 
Without going into details, let us just mention here that it is possible to choose such an operator valued function $\mu_\lambda$ on $U(n)$ in such a way that it defines a unitary representation of the double cover of the symplectic group which is called the metaplectic representation. We  refer to \cite{Book-Folland-phase-space} for the details.


\subsection{Weyl transform, Weyl multipliers and  Weyl correspondence } \label{subsec-prelim-Weyl-tr-metaplectic-rep}

Using the representations $ \pi_\lambda $  we define the group Fourier transform of any $f \in L^1(\mathbb{H}^{n})$ as the operator valued function on $ \R^\ast$ given by
$$\widehat{f}(\lambda) = \int_{\mathbb{H}^{n}} f(z,t) \pi_{\lambda}(z,t) \, dz \, dt.$$ 
It can be shown that for $ f \in L^1 \cap L^2(\He^n) $ one has the Plancherel theorem which reads as 
$$  \int_{\mathbb H^n} |f(z,t)|^2 dz\, dt  = \int_{-\infty}^\infty \| \widehat{f}(\lambda)\|_{HS}^2\, d\mu(\lambda)$$ 
where $  d\mu(\lambda) = (2\pi)^{-n-1}  |\lambda|^n d\lambda$ is the Plancherel measure for $ \He^n.$ It is well known that the operator $ f \rightarrow \widehat{f} $ extends to the whole of $L^{2}(\He^{n})$ as a unitary map onto 
$ L^2(\R^\ast, \mathcal{S}_2, d\mu) $  where $ \mathcal{S}_2 $ is the Hilbert space of Hilbert-Schmidt operators on $L^{2}(\mathbb{R}^{n})$.\\

From the definition of $ \pi_\lambda$ we see that $ \pi_\lambda(z,t) = e^{i\lambda t}\, \pi_\lambda(z) $ where  we have set $   \pi_\lambda(z) = \pi_\lambda(z,0)$ and hence 
$$ \widehat{f}(\lambda) = \int_{\C^n} f^\lambda(z) \, \pi_\lambda(z)\, dz .$$  Here by $ f^\lambda(z) $ we mean the inverse Fourier transform of $ f(z,t) $ in the last variable:
$$ f^\lambda(z) = \int_{-\infty}^\infty f(z,t) e^{i\lambda t}\, dt.$$
This motivates us to define Weyl transform of a function $ g $ on $\mathbb{C}^n$ as follows: 
\begin{align*} 
\pi_{\lambda}(g) := \int_{\mathbb{C}^{n}} g(z)\pi_{\lambda} (z)\, dz. 
\end{align*}
Observe that $ \widehat{f}(\lambda) = \pi_\lambda(f^\lambda) $ for any $ f $ on $ \He^n.$ The Plancherel theorem for the Weyl transform then reads as 
$$ \int_{\C^n} |g(z)|^2 \, dz = (2\pi)^{n} |\lambda|^{-n} \|\pi_\lambda(g)\|_{HS}^2.$$ 
In fact, the Plancherel theorem for the Fourier transform on $ \He^n$ is proved by establishing the above identity.\\

The convolution of two functions  $f, g \in L^1(\mathbb{H}^{n})$ is defined in the usual way by
$$f \ast g (z,t)=\int_{\mathbb{H}^{n}} f\left((z,t)(w,s)^{-1}\right) g(w,s) \, dw \, ds.$$
It is then easy to check that $ \widehat{f \ast g}(\lambda) = \widehat{f}(\lambda)\,  \widehat{g}(\lambda).$ In a similar way, there is a convolution structure $\ast_{\lambda}$ on $ \C^n,$ called the $\lambda$-twisted convolution,  with respect to which the Weyl transform satisfies the relation $$ \pi_{\lambda}(F \ast_{\lambda} G) = \pi_{\lambda}(F) \pi_{\lambda}(G).$$ 
The $\lambda$-twisted convolution $\ast_{\lambda}$ between two functions $F$ and $G$ on $\mathbb{C}^n $ is defined by 
$$F \ast_{\lambda} G (z) = \int_{\mathbb{C}^{n}} F(z-w) G(w) e^{i \frac{\lambda}{2} \Im (z \cdot \bar{w})} \, dw.$$
It is easy to see that the convolution on $ \He^n $ and the $ \lambda$-twisted convolution  on $ \C^n $ are related by  $(f \ast g)^{\lambda} = f^{\lambda} \ast_{\lambda} g^{\lambda}.$ \\

For ready reference we record some easily verifiable properties of   the unitary operators $ \pi_\lambda(x,u) = \pi_\lambda(x+iu,0) .$ First of all we have
$$ \pi_\lambda(a,b)\pi_\lambda(x,u) = \pi_\lambda(x+a,u+b) e^{-i\frac{\lambda}{2}(u\cdot a- x\cdot b)}.$$
Recalling the definition of $ \pi_\lambda(f) $ another easy calculation shows that
$$ \pi_\lambda(a,b) \pi_\lambda(f) = \int_{\R^{2n}}  f(x,u) \pi_\lambda(x+a,u+b) e^{-i\frac{\lambda}{2}(u\cdot a- x\cdot b)} \, dx \,du .$$
In other words, the $\lambda$-twisted translation $ \tau_\lambda(a,b) $ defined by
\begin{align} \label{def:twisted-translation} 
\tau_\lambda(a,b)f(x,u) = f(x-a,u-b)\,e^{-i\frac{\lambda}{2}(u\cdot a- x\cdot b)} 
\end{align}
satisfies the relation $ \pi_\lambda(a,b) \pi_\lambda(f) = \pi_\lambda( \tau_\lambda(a,b)f) .$ This translates into the property
\begin{align} \label{trans-con}  \tau_\lambda(a,b) ( f\ast_\lambda g) = \tau_\lambda(a,b)f \ast_\lambda g 
\end{align}
for  the $\lambda$-twisted convolution of $ f $ with $ g .$ \\

Recall that bounded linear operators on $ L^2(\R^n) $ that commute  with translations on $ \R^n $ are of the form $ Tf = f \ast k $ for a distribution $k$ whose Fourier transform $ m = \widehat{k} \in L^\infty(\R^n)$. Thus $ \widehat{Tf}(\xi) = m(\xi) \widehat{f}(\xi) $ and for this reason the notation $ T_m $ is used for $ T.$ Such operators and the function $ m $  are called  Fourier multipliers. In a similar vein bounded linear operators $ T $ on $ L^2(\C^n) $ commuting with the $\lambda$-twisted translations $ \tau_\lambda(a,b)$ are of the form $ Tf = f \ast_\lambda k $ where $ k $ is a distribution, see \cite{MauceriWeylTransformJFA80}. By taking the Weyl transform we note that $ \pi_\lambda(Tf) = \pi_\lambda(f) \pi_\lambda(k) $ and for this reason the linear operator $ M =\pi_\lambda(k) $ is called the right Weyl multiplier. The notation $ T_M $ is used for the operator $ T $ and it is not difficult to show that $ T_M $ is bounded on $ L^2(\C^n) $ if and only if $ M $ is a bounded linear operator on $ L^2(\R^n).$ Weyl multipliers play an important  role in our study of bounded linear operators on twisted Fock spaces.\\

In what follows we also require the notion of the Weyl correspondence which is intimately related to the Weyl transform. For this, let us first recall that the symplectic Fourier transform $\mathcal{F}_\lambda$ of a function $ f \in L^1(\C^n) $ is defined by
\begin{align} \label{def:symplectic-Fourier-transform} 
\mathcal{F}_\lambda f(z) = (2\pi)^{-n} \int_{\C^n} f(z-w) e^{i \frac{\lambda}{2} \Im(z \cdot \bar{w})} \, dw  = (2\pi)^{-n} f \ast_\lambda 1(z). 
\end{align} 
We note the relation between the symplectic Fourier transform and the Euclidean Fourier transform on $\R^{2n}$, which is $ \mathcal{F}_\lambda f(z) = \widehat{f}(-\frac{i}{2}\lambda z).$ 
One can extend the definition of the symplectic Fourier transform to the space of tempered distributions. With that, given a tempered distribution $ f $, let us define its Weyl correspondence $W_\lambda(f)$ by 
\begin{align} \label{def:Weyl-correspondence} 
W_\lambda(f) = \pi_\lambda(\mathcal{F}_\lambda f). 
\end{align} 
Note that when $f = P$ is a polynomial, $\mathcal{F}_\lambda P $ is nothing but a finite sum of derivatives of the Dirac delta, from which it follows that $ W_\lambda(P) $ is a differential operator. Building on this fact, Geller \cite{Geller-spherical-harmonics-weyl-transform} described  operator analogues of spherical harmonics. We shall recall this later (see \Cref{subsec-prelim-operator-sph-harmonics}). 


\subsection{Sublaplacian, Hermite and special Hermite operators}  The role of the Laplacian for the Heisenberg group is played by the so called sublaplacian which is defined  by $ \mathcal{L} = - \sum_{j=1}^n (X_j^2 +Y_j^2).$  Here  $X_j, Y_j $ are the left invariant vector fields on $ \He^n $ given explicitly by
$$ X_j = \frac{\partial}{\partial x_j} +\frac{1}{2} y_j \frac{\partial}{\partial t}, \quad Y_j = \frac{\partial}{\partial y_j} - \frac{1}{2} x_j \frac{\partial}{\partial t}.$$
The sublaplacian $ \mathcal{L} $ on the Heisenberg group gives rise to a family of operators $ L_\lambda,$ indexed by non-zero reals $ \lambda.$  The relation between $ \mathcal{L} $ and $ L_\lambda $ is given by $ \mathcal{L}(e^{i\lambda t} g(z)) = e^{i\lambda t} L_\lambda g(z).$  We also have the relation $ (\mathcal{L}f)^\lambda(z) = L_\lambda f^\lambda(z).$ These operators $ L_\lambda $ are known as special Hermite operators (also called  twisted Laplacians) and they generate  diffusion semigroups with explicit heat kernels given by 
\begin{align} \label{def:heat-kernel-special-Hermite}
p_t^\lambda(y,v) = (4\pi)^{-n} \left( \frac{\lambda}{ \sinh \lambda t}\right)^n e^{-\frac{1}{4}\lambda (\coth \lambda t)(|y|^2+|v|^2)}. 
\end{align}
For more about the special Hermite operators and the semigroup generated by them we refer to the monograph \cite{Thangavelu-Hermite-Laguerre-Expansions-Book}. \\

The sublaplacian $ \mathcal{L} $ on $ \He^n $ is also intimately connected to the Hermite operators $ H(\lambda) = -\Delta+\lambda^2 |x|^2 $ for $ \lambda \in \R^\ast.$ We have the relation $ \widehat{\mathcal{L}f}(\lambda) = \widehat{f}(\lambda)\,H(\lambda).$ In view of this the spectral theory of $ \mathcal{L} $ depends quite a lot on the spectral decomposition of $ H(\lambda).$ The latter is provided by the Hermite functions. For each $ \alpha \in \mathbb{N}^n $ we let $ \Phi_\alpha $ stand for the normalised Hermite function 
$$ \Phi_\alpha(x) = c_\alpha \, e^{\frac{1}{2}|x|^2} \partial^\alpha e^{-|x|^2} = c_\alpha\, H_\alpha(x) e^{-\frac{1}{2}|x|^2} .$$
Here $ c_\alpha $ is chosen so that  $ \| \Phi_\alpha \|_2 =1.$ The Hermite functions are eigenfunctions of $ H= H(1) $ with eigenvalues $ (2|\alpha|+n) $ and they form an orthonormal basis for $ L^2(\R^n).$ For each $ \lambda \in \R^\ast $ we let $ \Phi_\alpha^\lambda(x) = |\lambda|^{n/4} \Phi_\alpha(|\lambda|^{1/2} x) $ so that $ H(\lambda)  \Phi_\alpha^\lambda(x) = (2|\alpha|+n)|\lambda| \,  \Phi_\alpha^\lambda(x).$ For all the properties of the Hermite functions that we use in this work we refer to the monographs \cite{Thangavelu-Hermite-Laguerre-Expansions-Book, Book-Thangavelu-uncertainty}.\\

The Hermite functions $ \Phi_\alpha^\lambda, \, \alpha \in \mathbb{N}^n $ form an orthonormal basis for $ L^2(\R^n).$ Given $ f \in L^2(\R^n) $ we let $ P_k (\lambda) $ stand for the orthogonal projection of $ L^2(\R^n) $ onto the eigenspace $ E_k^\lambda$ which is just the span of $ \Phi_\alpha^\lambda, \, |\alpha|=k$. More explicitly, for any $ k \in \mathbb{N}$, we have $ P_k (\lambda) f = \sum_{|\alpha|=k} (f, \Phi_\alpha^\lambda) \Phi_\alpha^\lambda.$ The spectral decomposition of $ H(\lambda) $ then reads as 
$$ H(\lambda) = \sum_{k=0}^\infty (2k+n)|\lambda|\, P_k(\lambda).$$
The operator $ H(\lambda) $ has the following factorisation:
$$ H(\lambda) = \frac{1}{2} \sum_{j=1}^n \left(A_j(\lambda) A_j^\ast(\lambda) + A_j^\ast(\lambda) A_j(\lambda) \right) $$
where $ A_j^\ast(\lambda) = -\frac{\partial}{\partial x_j}+|\lambda| x_j $ and $A_j(\lambda) = \frac{\partial}{\partial x_j}+|\lambda| x_j $ are the creation and annihilation operators of quantum mechanics. The action of these operators on $ \Phi_\alpha^\lambda $ are explicitly known and we make use of them in our proofs later.\\

Among other properties of the metaplectic representation $\mu_\lambda$ satisfying \eqref{relation:metaplectic-rep-Schorodinger-rep}, we make use the following one. It is known that for any $\sigma \in U(n)$, the operator $\mu_\lambda(\sigma)$ leaves the space $ E_k^\lambda $ invariant. Thus $ \mu_\lambda(\sigma) $ commutes with $ P_k(\lambda) $ hence also with operators of the form $ m(H(\lambda)) $ defined using spectral theorem.


\subsection{Spherical harmonics and their operator analogues} \label{subsec-prelim-operator-sph-harmonics}
We begin by recalling some relevant results from the theory of bigraded spherical harmonics and their operator analogues. This subsection is actually a reproduction of Section $2.5$ of \cite{Basak-Garg-Thangavelu-Weyl}. But, we prefer to write the details here as we need these tools and facts in Section \ref{Sec-proof-main-results}. \\

The unitary group $K=U(n)$  acts on any function space on the unit sphere $S^{2n-1}$. Let $ M$ be the subgroup of $K$ that fixes the coordinate vector $e_{1}=(1,0,\ldots,0)$. Since $K$ acts transitively on $S^{2n-1}$, we can identify $S^{2n-1}$ with $K/M$ via the map $\omega \rightarrow u M $ if $ \omega = u\cdot e_{1}$. The natural action of $K$ on $L^2(S^{2n-1})$ can be decomposed in terms of irreducible unitary representations having $ M$-fixed vectors known as class one representations. For each pair $(p,q) \in \mathbb{N}^2$, let $\mathcal{P}_{p,q}$ be the set of all polynomials on $\mathbb{C}^n$ which are of the form 
$$ P(z) = \sum_{|\alpha|=a} \sum_{|\beta|=b} a_{\alpha\beta} z^{\alpha} \bar{z}^{\beta}.$$ 
Each $P \in \mathcal{P}_{p,q}$ satisfies the homogeneity condition $P(s z) = s^{p} \bar{s}^{q} P(z)$ for all $s \in \mathbb{C} \setminus \{0\}$. Let $\Delta = 4 \sum_{j=1}^{n}\frac{\partial^{2}}{\partial z_{j}\partial\overline{z}_{j}}$ be the Laplacian on $\mathbb{C}^{n}$. Define $\mathcal{H}_{p,q} := \left\lbrace P\in\mathcal{P}_{p,q}: \Delta P=0 \right\rbrace$. The elements of $\mathcal{H}_{p,q}$ are called  bigraded solid harmonics. It is known that the representations $\delta_{p,q}$ of $U(n)$ defined on $\mathcal{H}_{p,q}$ by $\delta_{p,q}(\sigma)P(z) = P(\sigma^{-1} z)$ are irreducible unitary representations and exhaust all class one irreducible unitary representations of $ K,$ upto unitary equivalence. We denote this class of representations by $\widehat{K_0}$. 
For each $\delta=\delta_{p,q},$ we let $d(\delta)$ denote the dimension of $\mathcal{H}_{p,q}$ and  $\chi_\delta$  the character associated to $\delta$. \\

The space $\mathcal{H}_{p,q}$  is made into a Hilbert space by equipping it with the inner product
$$(f,g)_{\mathcal{H}_{p,q}} = \frac{2^{-(n+p+q-1)}}{\Gamma(n+p+q)} \int_{\mathbb{C}^n} f(z) \overline{g(z)} e^{-\frac{1}{2} |z|^2} \, dz.$$ 
We fix an orthonormal basis $\{P^{\delta}_{j}$ : $1\leq j\leq d(\delta)\}$ for  $\mathcal{H}_{p,q}.$  Then by defining the spherical harmonics $Y_j^\delta$ by the relation $ P_j^\delta(z) = |z|^{p+q} \,Y_j^\delta(\omega),$  for $z= |z| \, \omega$, the collection
$$\left\lbrace Y^{\delta}_{j}  : \delta \in \widehat{K_0}, 1\leq j\leq d(\delta) \right\rbrace $$ 
becomes an orthonormal basis for $L^{2}(S^{2n-1})$. Given a measurable function $ f $ on $ \C^n $ which  has a well defined restriction on every sphere $S_R = \{ z: |z| =R \}$, we have the spherical harmonic expansion 
\begin{equation} \label{harmonic-exp-1} 
f(R\omega) = \sum_\delta \sum_{j=1}^{d(\delta)} (f_R,Y_j^\delta)_{L^2(S^{2n-1})}  Y_j^\delta(\omega),
\end{equation}
where $ f_R(\omega) := f(R\omega)$ for $\omega \in S^{2n-1}.$ 
If $ z = R \omega, $ we can rewrite the above expansion in the following form. For each $R > 0$, let $ \sigma_R $ be the normalised surface measure on the sphere $S_R = \{ z: |z|=R \}$ defined by 
$$\int_{S_R} f(z) d\sigma_R = \int_{S^{2n-1}} f(R\omega) d\sigma.$$
Denoting the inner product in $ L^2(S_R,d\sigma_R) $ by $ (f,g)_R$, we can rewrite \eqref{harmonic-exp-1} as
\begin{align*} 
f(z) = \sum_\delta \sum_{j=1}^{d(\delta)} R^{-(p+q)} (f,P_j^\delta)_R   \,\, R^{-(p+q)} P_j^\delta(z), \quad \textup{for } z \in S_R. 
\end{align*}

There is an operator analogue of the above mentioned spherical harmonics which we briefly recall below. For details, we refer to \cite{Geller-spherical-harmonics-weyl-transform, Thangavelu-Poisson-trans-Heisenberg}, \cite[Section $2.7$]{Book-Thangavelu-uncertainty} and  \cite[Section 2.5]{Basak-Garg-Thangavelu-Weyl}. For each $k \in \mathbb{N}$, consider the the following sub-collection of $\widehat{K_0}$: 
$$\widehat{K}(k) = \left\lbrace \delta_{p,q}\in \widehat{K_0}: 0 \leq p \leq k, q \in \mathbb{N}\right\rbrace. $$ 
Let $E_{k}^\lambda$ be the finite dimensional subspace defined in the previous subsection. Let $\mathcal{O} (E_{k}^\lambda)$ be the space of all bounded linear operators $T : E_{k}^\lambda \to L^{2}(\mathbb{R}^{n})$. Then we can make $\mathcal{O} (E_{k}^\lambda)$ into a Hilbert space by defining the following inner product: 
\begin{align*} 
\left(T, S\right)_{k} = \frac{k!(n-1)!}{(k+n-1)!} \sum_{|\alpha|=k} \left(T \Phi_{\alpha}^\lambda, S \Phi_{\alpha}^\lambda \right).
\end{align*}
Recall that we have fixed an orthonormal basis $\{P^{\delta}_{j}$ : $1\leq j\leq d(\delta)\}$ of $\mathcal{H}_{p,q}$ and the Weyl correspondence $W_\lambda(f)$ of a tempered distribution $ f $ is defined by the equation in \eqref{def:Weyl-correspondence}. In an impressive work \cite{Geller-spherical-harmonics-weyl-transform} Geller proved that the Weyl correspondences of $ P_j^\delta,$ 
$$ \{W_\lambda (P^{\delta}_{j}) : \delta\in \widehat{K}(k), 1\leq j \leq d(\delta) \} $$ 
form an orthogonal system in $\mathcal{O}(E_{k}^\lambda)$ and that every operator $T \in \mathcal{O}(E_{k}^\lambda)$ has the expansion 
\begin{align*} 
T = \sum_{\delta \in \widehat{K}(k)} \sum_{j=1}^{d(\delta)} (C_\delta ((2k+n) |\lambda|))^{-2} \left(T, W_\lambda (P^{\delta}_{j}) \right)_{k} W_\lambda (P^{\delta}_{j})
\end{align*}
where $(C_\delta ((2k+n) |\lambda|))^2 = (W_\lambda (P_j^\delta), W_\lambda (P_j^\delta)_k.$ These constants can be computed  explicitly, see \cite{Geller-spherical-harmonics-weyl-transform} :
$$(C_\delta ((2k+n) |\lambda|))^2 = c_\lambda \, 4^{p+q} \, 2^{n+p+q-1} \,  \frac{\Gamma(k+n+q)}{\Gamma(k-p+1)} \,  \frac{\Gamma(k+1) \, \Gamma(n)}{\Gamma(k+n)}.$$
It follows that $\{ (C_\delta ((2k+n) |\lambda|))^{-1} W_\lambda (P^{\delta}_{j}) : \delta\in \widehat{K}(k), 1\leq j \leq d(\delta)\}$ forms an orthonormal basis for $ \mathcal{O}(E_k^\lambda).$

For the convenience of the readers (and also for later use) let us record the above result of Geller in the following form.  For each $\delta \in \widehat{K}(k)$ and $ 1 \leq j \leq d(\delta) $ we define
$$ S_{j,k}^\delta (\lambda) = (C_\delta ((2k+n) |\lambda|))^{-1} W_\lambda (P_j^\delta) P_k (\lambda).$$
We note that $ S_{j,k}^\delta (\lambda)$ are Hilbert-Schmidt operators on $ L^2(\R^n) $ with unit norm. We have

\begin{thm}[Geller] The collection $ \{ S_{j,k}^\delta (\lambda) : k \in \mathbb{N}, \delta \in \widehat{K}(k), 1 \leq j \leq d(\delta) \}$ is an orthonormal basis for the Hilbert space $ \mathcal{S}_2 $ of Hilbert-Schmidt operators on $ L^2(\R^n) $ equipped with the inner product $ (T,S) = tr(S^\ast T).$ Moreover, for any Hilbert-Schmidt operator $ T $ on $ L^2(\R^n) $ we have
\begin{align*}
T = \sum_{k=0}^\infty \sum_{\delta \in \widehat{K}(k)} \sum_{j=1}^{d(\delta)} ( T, S_{j,k}^\delta (\lambda)) \, S_{j,k}^\delta (\lambda) 
\end{align*} 
where the series converges in $ \mathcal{S}_2 $ and we have the identity 
\begin{align*} 
\|T\|_{HS}^2 = \sum_{k=0}^\infty \sum_{\delta \in \widehat{K}(k)} \sum_{j=1}^{d(\delta)} \left| ( T, S_{j,k}^\delta (\lambda)) \right|^2. 
\end{align*} 
\end{thm}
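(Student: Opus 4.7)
The plan is to derive this global orthonormal-basis statement from Geller's fiberwise expansion (for each fixed eigenspace $E_k^\lambda$) recalled immediately above, together with the direct-sum decomposition of $\mathcal{S}_2$ induced by the spectral resolution $I = \sum_{k \geq 0} P_k(\lambda)$.

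First I would establish orthogonality of the full system. Since $S_{j,k}^\delta(\lambda)$ has $P_k(\lambda)$ on the right, we have $S_{j,k}^\delta(\lambda)\Phi_\alpha^\lambda = 0$ whenever $|\alpha|\neq k$. Expanding the Hilbert--Schmidt inner product in the Hermite basis,
$$ \bigl(S_{j,k}^\delta(\lambda),\, S_{j',k'}^{\delta'}(\lambda)\bigr) = \sum_{\alpha \in \mathbb{N}^n} \bigl(S_{j,k}^\delta(\lambda)\Phi_\alpha^\lambda,\, S_{j',k'}^{\delta'}(\lambda)\Phi_\alpha^\lambda\bigr),$$
forces every cross term with $k \neq k'$ to vanish automatically. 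For $k=k'$ only the block $|\alpha|=k$ survives, and comparison with Geller's inner product
$$ (T,S)_k = \frac{k!(n-1)!}{(k+n-1)!} \sum_{|\alpha|=k} \bigl(T\Phi_\alpha^\lambda, S\Phi_\alpha^\lambda\bigr) $$
shows that the Hilbert--Schmidt inner product on operators supported in $E_k^\lambda$ coincides with $(\cdot,\cdot)_k$ up to the dimension factor $d_k = (k+n-1)!/(k!(n-1)!)$. Geller's orthogonality of $\{(C_\delta)^{-1} W_\lambda(P_j^\delta)\}_{\delta \in \widehat{K}(k),\, 1 \leq j \leq d(\delta)}$ in $\mathcal{O}(E_k^\lambda)$ therefore transfers directly to orthogonality of $\{S_{j,k}^\delta(\lambda)\}$ in $\mathcal{S}_2$, the normalisation $(C_\delta((2k+n)|\lambda|))^{-1}$ being calibrated so that the resulting Hilbert--Schmidt norm is one.

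Next, to prove completeness, I would take an arbitrary $T \in \mathcal{S}_2$ and write $T = \sum_{k=0}^\infty T\, P_k(\lambda)$, convergent in $\mathcal{S}_2$. The summands are mutually Hilbert--Schmidt orthogonal, because $(T P_k(\lambda))^\ast (T P_{k'}(\lambda)) = P_k(\lambda) T^\ast T P_{k'}(\lambda)$ has zero trace when $k \neq k'$; and a direct Hermite-basis calculation gives $\sum_k \|T P_k(\lambda)\|_{HS}^2 = \sum_\alpha \|T\Phi_\alpha^\lambda\|^2 = \|T\|_{HS}^2$. Each piece $T P_k(\lambda)$ lies in $\mathcal{O}(E_k^\lambda)$, so Geller's fiberwise theorem expands it as
$$ T P_k(\lambda) = \sum_{\delta \in \widehat{K}(k)} \sum_{j=1}^{d(\delta)} c_{j,k}^\delta(T)\, S_{j,k}^\delta(\lambda),$$
and reconciling Geller's coefficient $(T P_k(\lambda), W_\lambda(P_j^\delta))_k$ with $(T, S_{j,k}^\delta(\lambda))_{HS}$ via the $d_k$ factor identifies $c_{j,k}^\delta(T)$ with the inner product appearing in the claimed expansion. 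Summing over $k$ produces both the expansion and, as the sum of the fiberwise Parseval identities, the required global Parseval identity.

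The main obstacle is really the bookkeeping of normalisations: one must confirm that the factor $d_k$ relating $(\cdot,\cdot)_k$ to $(\cdot,\cdot)_{HS}$ on the $k$-th fibre is consistently absorbed into the constants $C_\delta((2k+n)|\lambda|)$ so that $\|S_{j,k}^\delta(\lambda)\|_{HS} = 1$ and the expansion coefficients take the claimed inner-product form. Once this normalisation bookkeeping is checked, the statement reduces to a formal direct-sum assembly of Geller's result along $\mathcal{S}_2 = \bigoplus_{k \geq 0} \mathcal{S}_2\bigl(E_k^\lambda, L^2(\R^n)\bigr)$.
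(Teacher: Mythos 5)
Your proposal takes essentially the same route as the paper, which proves the theorem in one breath by citing the orthogonal decomposition $\|T\|_{HS}^2 = \sum_{k}\|TP_k(\lambda)\|_{HS}^2$ together with Geller's fiberwise expansion on each $\mathcal{O}(E_k^\lambda)$; you correctly fill in the details that this terse argument leaves implicit. The normalisation reconciliation you flag at the end is the one genuinely substantive step, and your worry is well-placed rather than mere bookkeeping: with the conventions as recalled in the paper, $(\cdot,\cdot)_k$ carries a prefactor $1/d_k$ (where $d_k=\dim E_k^\lambda$) that $\|\cdot\|_{HS}$ does not, so $(TP_k(\lambda),SP_k(\lambda))_{HS}=d_k\,(T,S)_k$, and the displayed formula $(C_\delta((2k+n)|\lambda|))^2=(W_\lambda(P_j^\delta),W_\lambda(P_j^\delta))_k$ then gives $\|S_{j,k}^\delta(\lambda)\|_{HS}^2=d_k$ rather than $1$ -- so the claimed unit normalisation needs to be verified against Geller's original constants (for instance, the factor $\tfrac{\Gamma(k+1)\Gamma(n)}{\Gamma(k+n)}=1/d_k$ appearing in the explicit $C_\delta^2$ may be a misprint for its reciprocal) instead of being read off from the formulae as reproduced here.
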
 

It is clear that the collection $ S_{j,k}^\delta (\lambda)$ is an orthonormal set in view of the definition of the inner product on $ \mathcal{S}_2 $ and the orthogonality properties of $ W_\lambda(P_j^\delta).$ The rest of the theorem follows from the fact that $ \|T\|_{HS}^2 = \sum_{k=0}^\infty \| T P_k (\lambda) \|_{HS}^2 $ and Geller's result, see \cite{Geller-spherical-harmonics-weyl-transform}. 

\medskip 
For  each fixed $\delta = \delta_{p,q} $ and $1 \leq j \leq d(\delta),$ we  define the operator
\begin{align} \label{Oper-Spherical-Harmonic-M-j-delta} 
T_j^\delta (\lambda) := \sum_{k=p}^{\infty} (C_\delta ((2k+n) |\lambda|))^{-2} \left(T, W_\lambda (P^{\delta}_{j})\right)_{k} P_k (\lambda). 
\end{align} 
For any $ T \in B(L^2(\R^n)) $ the above series in \eqref{Oper-Spherical-Harmonic-M-j-delta} converges in the strong operator topology.  Indeed, as $ f = \sum_{k=0}^\infty P_k (\lambda) f $ for any $ f \in L^2(\R^n) $ we only need to check that the sequence  $ (C_\delta ((2k+n) |\lambda|))^{-2} \left(T, W_\lambda (P^{\delta}_{j})\right)_{k}$ is bounded. But this is easy to see: by the definition
$$ (T, W_\lambda (P_j^\delta))_k = \frac{\Gamma(k+1) \Gamma(n)}{\Gamma(k+n)} \sum_{|\alpha|=k} (T \Phi_\alpha^\lambda, W_\lambda (P_j^\delta) \Phi_\alpha^\lambda). $$ 
Applying Cauchy-Schwarz inequality and recalling the definition of $ (C_{\delta., \lambda} (2k+n))^{2} $ we obtain 
$$ \left| \left( T, W_\lambda (P_j^\delta) \right)_k \right|^2 \leq \|T\|^2 (C_\delta ((2k+n) |\lambda|))^{2} $$
after making use of the fact that the dimension of $ E_k^\lambda $ is $ \frac{\Gamma(k+n)}{\Gamma(k+1) \Gamma(n)}.$  Thus we have the estimate
$$ (C_\delta ((2k+n) |\lambda|))^{-2} \left| \left( T, W_\lambda (P_j^\delta) \right)_k \right| \leq \|T\| (C_\delta ((2k+n) |\lambda|))^{-1} $$
which is clearly bounded in view of Stirling's formula for the Gamma function. The operator norm of $ T_j^\delta (\lambda)$ is given by
$$ \| T_j^\delta (\lambda) \| =  \sup_{k \in \mathbb N} \left\{ (C_\delta ((2k+n) |\lambda|))^{-2} \left| \left( T, W_\lambda (P_j^\delta) \right)_k \right| \right\} < \infty.$$


\begin{rem}  In view of the above discussion we have the formal expansion
$$  T = \sum_{\delta \in \widehat{K_0}} \sum_{j=1}^{d(\delta)} W_\lambda (P_j^\delta) T_j^\delta (\lambda) $$ 
for any $ T \in B(L^2(\R^n)).$ For any $ k \in \mathbb{N}$ we get back the convergent expansion 
$$ T P_k (\lambda) = \sum_{\delta \in \widehat{K_0}} \sum_{j=1}^{d(\delta)} (T,S_{j,k}^\delta (\lambda)) S_{j,k}^\delta (\lambda).$$ 
\end{rem}


\subsection{Twisted Bergman and Fock spaces} \label{subsec-prelim-twisted-bergman-fock-spaces} 
We start with recalling the twisted Bergman spaces $ \mathcal{B}_t^\lambda(\C^{2n}).$ 
The twisted Segal-Bargmann transform $ B_{t,\lambda} $ is defined on $ L^2(\R^{2n}) $ by 
\begin{align*} 
B_{t,\lambda} f(z,w) = f \ast_\lambda p_t^\lambda(z,w) = \int_{\R^{2n}} f(a,b) p_t^\lambda(z-a,w-b) e^{-\frac{i}{2}  \lambda (w\cdot a-z\cdot b)} \, da\, db 
\end{align*} 
where $ p_t^\lambda $ is the heat kernel associated to the special Hermite operator $ L_\lambda$ defined in \eqref{def:heat-kernel-special-Hermite}. The image of $ L^2(\R^{2n}) $ under $ B_{t,\lambda}$  is known to be a weighted Bergman space denoted by $\mathcal{B}_t^\lambda(\C^{2n}) .$ More precisely, this turns out to be the space consisting of all entire functions $ F $ on $ \C^{2n} $ which are square integrable with respect to the weight function 
\begin{align*} 
W_t^\lambda(z,w) = 4^n e^{\lambda(u \cdot y- v \cdot x)} p_{2t}^\lambda(2y,2v) = 4^n e^{\lambda \Im(z \cdot \bar{w})} p_{2t}^\lambda(2y,2v). 
\end{align*} 
Moreover, for every $ f \in L^2(\R^{2n}) $ we have the identity
$$ \int_{\C^{2n}} |B_{t,\lambda} f(z,w)|^2 \, W_t^\lambda(z,w) \, dz\, dw = \int_{\R^{2n}} |f(a,b)|^2 da\, db $$
which makes $ B_{t,\lambda} $ into a unitary operator. The reproducing kernel for the twisted Bergman space $\mathcal{B}_t^\lambda(\C^{2n})$ is given by 
\begin{align} \label{formula:reproducing-kernel-twisted-Bergman-space}
K_t^\lambda \left( (z,w), \overline{(a,b)} \right) = p_{2t}^\lambda( z - \bar{a}, w - \bar{b}) e^{-\frac{i}{2} \lambda (w \cdot \bar{a} - z \cdot \bar{b})}. 
\end{align}
This simply means that for any $ f \in L^2(\R^{2n}) $ we have 
$$B_{t,\lambda} f(z,w) = \int_{\C^{2n}} B_{t,\lambda} f(a,b) K_t^\lambda \left( (z,w), \overline{(a,b)} \right) W_t^\lambda(a,b) \, da \,db.$$
We refer the reader to \cite{KTX}  for more information on twisted Bergman spaces $\mathcal{B}_t^\lambda(\C^{2n})$.\\

For each $ t> 0 $ we define the twisted Fock space $ \mathcal{F}_t^\lambda(\C^{2n}) $ to be the space of entire functions on $ \C^{2n} $ for which
$$ \int_{\C^{2n}} |F(z,w)|^2 \, w_t^\lambda(z,w)\, dz\, dw  < \infty $$
where the weight function $ w_t^\lambda(z,w)$ is given by 
\begin{align} \label{def:twisted-Fock-space-weight-function} 
w_t^\lambda(z,w) = e^{\lambda \Im(z \cdot \bar{w})} e^{-\frac{1}{2}\lambda (\coth 2t\lambda)(|z|^2+|w|^2)} . 
\end{align} 
When $ t = 1/2$ we simply write $ \mathcal{F}^\lambda(\C^{2n}) $ instead of $ \mathcal{F}_{1/2}^\lambda(\C^{2n}). $
It is easy to see that $ F \in \mathcal{F}_t^\lambda(\C^{2n}) $ if and only if 
\begin{align*} 
F(z,w) = F_0(z,w)\, p_{2t}^\lambda(z,w)^{-1}, \quad F_0 \in \mathcal{B}_t^\lambda(\C^{2n}). 
\end{align*} 
Thus there is a one to one correspondence between $ \mathcal{B}_t^\lambda (\C^{2n}) $ and $\mathcal{F}_t^\lambda(\C^{2n}).$ By defining 
\begin{align*} 
G_{t,\lambda}f(z,w) =  p_{2t}^\lambda(z,w)^{-1} \, B_{t,\lambda}f(z,w) =  p_{2t}^\lambda(z,w)^{-1}\, f \ast_\lambda p_t^\lambda(z,w) 
\end{align*} 
we see that $ G_{t,\lambda} : L^2(\R^{2n}) \rightarrow \mathcal{F}_t^\lambda(\C^{2n}) $ is a unitary operator.
In view of this, it is easy to see that the reproducing formula for the twisted Fock space reads as 
\begin{align} \label{formula:reproducing-twisted-Fock-space} 
F(z,w) = \int_{\C^{2n}} F(a,b)\, e^{\frac{1}{2}\lambda (\coth 2t\lambda)(z \cdot \bar{a}+w \cdot \bar{b})} e^{-\frac{i}{2} \lambda (w \cdot \bar{a}- z \cdot \bar{b})} w_t^\lambda(a,b) \, da\, db. 
\end{align} 
We observe that when $ \lambda = 0 $ the twisted Fock space reduces to the standard Fock space $ \mathcal{F}(\C^{2n}) $ and the reproducing kernel becomes $ e^{\frac{1}{2}(z \cdot \bar{a}+w \cdot \bar{b})}$ which is the reproducing kernel for the Fock space $ \mathcal{F}(\C^{2n}) .$


\subsection{The operator \texorpdfstring{$U_{t,\lambda}$}{}} \label{subsec-prelim-intertwining-operator} 
Observe that the weight function $ w_t^\lambda(z,w) $, given by \eqref{def:twisted-Fock-space-weight-function}, satisfies $ w_t^\lambda(iz,iw) = w_t^\lambda(z,w) $ and hence the map $ UF(z,w) = f(-iz,-iw) $ is unitary on the twisted Fock space $ \mathcal{F}_t^\lambda(\C^{2n}) $. As $ G_{t,\lambda} : L^2(\R^{2n}) \rightarrow \mathcal{F}_t^\lambda(\C^{2n}) $ is unitary, it follows that there is a unitary operator $ U_{t,\lambda} $ on $ L^2(\R^{2n}) $ such that $ U \circ G_{t,\lambda} = G_{t,\lambda} \circ U_{t,\lambda}.$ More explicitly,
\begin{align} \label{convolution-property-U-t-lambda}
U_{t,\lambda}f \ast_\lambda p_t^\lambda(z,w) = e^{-\frac{1}{2} \lambda (\coth 2t\lambda)(z^2+w^2)}\, f \ast_\lambda p_t^\lambda(-iz,-iw) 
\end{align} 
for any $ f \in L^2(\R^{2n}).$  When $ t = 1/2 $ we simply write $ U_\lambda $ instead of $ U_{1/2,\lambda}$ so that we have  the relation
\begin{align} \label{commuting-property-U-t-lambda}
U \circ G_{\lambda} = G_{\lambda} \circ U_{\lambda}. 
\end{align}
The operator $U_{t,\lambda}$, which takes $ p_t^\lambda $ into itself, has the following explicit description.

\begin{prop} \label{prop:intertwining-operator-U-t-lambda}
Let $ c_t(\lambda)= \frac{1}{2} \lambda (\coth t \lambda).$ For any $ f \in  L^2(\R^{2n}) $ we have  
\begin{align} \label{formula:explicit-expression-intertwining-operator-U-t-lambda}
U_{t,\lambda}f(x,u) = c_t(\lambda)^n \, \widehat{f}(c_t(\lambda)(x,u)) 
\end{align}
where $\widehat{f}$ is the Euclidean Fourier transform of $f$ on $\mathbb R^{2n}$.
\end{prop}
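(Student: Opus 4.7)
The plan is to verify \eqref{formula:explicit-expression-intertwining-operator-U-t-lambda} by substituting the claimed formula into the defining relation \eqref{convolution-property-U-t-lambda} and showing that both sides reduce to the same Gaussian integral against $f$. Since both sides of \eqref{convolution-property-U-t-lambda} are entire in $(z,w) \in \C^{2n}$, it suffices to check the identity for $(z,w)=(x,u) \in \R^{2n}$. Set $c = c_t(\lambda) = \tfrac{1}{2}\lambda\coth(t\lambda)$, so that $p_t^\lambda(y,v) = A_t\,e^{-\frac{c}{2}(|y|^2+|v|^2)}$ with $A_t = (4\pi)^{-n}(\lambda/\sinh(t\lambda))^n$, and observe that $\lambda/(2c) = \tanh(t\lambda)$.

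For the right-hand side of \eqref{convolution-property-U-t-lambda}, the analytic continuation
\[
p_t^\lambda(-ix-a,-iu-b) = A_t\,e^{\frac{c}{2}(|x|^2+|u|^2)}\,e^{-\frac{c}{2}(|a|^2+|b|^2)}\,e^{-ic(x\cdot a+u\cdot b)}
\]
and the reduction of the twisted-convolution phase at $(-ix,-iu)$ to the real exponential $e^{-\frac{\lambda}{2}(u\cdot a-x\cdot b)}$, combined with the prefactor $e^{-\frac{\lambda\coth(2t\lambda)}{2}(|x|^2+|u|^2)}$ and the hyperbolic double-angle identity $\coth(t\lambda)-2\coth(2t\lambda)=-\tanh(t\lambda)$, express the RHS of \eqref{convolution-property-U-t-lambda} as
\[
A_t\,e^{-\frac{\lambda\tanh(t\lambda)}{4}(|x|^2+|u|^2)}\int_{\R^{2n}} f(a,b)\,e^{-\frac{c}{2}(|a|^2+|b|^2)}\,e^{-ic(\alpha\cdot a+\beta\cdot b)}\,da\,db,
\]
where $\alpha := x-i\tanh(t\lambda)u$ and $\beta := u+i\tanh(t\lambda)x$.

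For the left-hand side, I would substitute $U_{t,\lambda}f(a,b) = c^n\widehat{f}(c(a,b))$ and rescale $(\xi,\eta)=c(a,b)$; the integrand becomes $\widehat{f}(\xi,\eta)$ against the Gaussian-phase kernel $h(\xi,\eta) = e^{\alpha\cdot\xi+\beta\cdot\eta-\frac{1}{2c}(|\xi|^2+|\eta|^2)}$ (with the same $\alpha,\beta$). Applying Parseval $\int \widehat f\cdot h = \int f\cdot\widehat h$ with the Plancherel-normalized Fourier transform (the convention that precisely forces the prefactor $c^n$ in the formula), and computing $\widehat h$ by completing the square using $\alpha^2+\beta^2 = (1-\tanh^2(t\lambda))(|x|^2+|u|^2)$, yields the same expression as the RHS.

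The main obstacle is the careful bookkeeping of Gaussian constants and complex phase factors; the essential algebraic inputs are the hyperbolic identity $\coth(\theta)-2\coth(2\theta)=-\tanh(\theta)$ and the explicit Fourier transform of a Gaussian. Alternatively, one may verify the formula on a dense subspace of $L^2(\R^{2n})$ where both sides reduce to elementary integrals, and then extend by the unitarity of $U_{t,\lambda}$.
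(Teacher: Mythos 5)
Your proof is correct and follows essentially the same strategy as the paper: you substitute the claimed formula into the defining relation \eqref{convolution-property-U-t-lambda}, move the Fourier transform off $\widehat f$ via Parseval (the paper phrases this as ``throwing the Fourier transform ... upon the function $\tau_\lambda(a',b')p_t^\lambda$''), and reduce the verification to a Gaussian computation governed by the hyperbolic identity $2\coth(2\theta) = \coth\theta + \tanh\theta$ (equivalently your $\coth\theta - 2\coth(2\theta) = -\tanh\theta$). The only difference is that you carry out the final Gaussian bookkeeping (completing the square with $\alpha,\beta$ and the identity $\alpha\cdot\alpha+\beta\cdot\beta=(1-\tanh^2 t\lambda)(|x|^2+|u|^2)$) explicitly, whereas the paper leaves it as a routine direct calculation.
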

\begin{proof} It is enough to show that the operator $ T_{t,\lambda} f(x,u) = c_t(\lambda)^n \, \widehat{f}(c_t(\lambda)(x,u)) $ satisfies the same defining relation as $ U_{t,\lambda}.$
For the same, we shall show that for all $ (x,u) \in \R^{2n},$
$$ T_{t,\lambda}f \ast_\lambda p_t^\lambda(x,u) = e^{-\frac{1}{2} \lambda (\coth 2t\lambda)(x^2+u^2)}\, f \ast_\lambda p_t^\lambda(-ix,-iu),$$
which amounts to verifying the identity
\begin{align*}
& c_t(\lambda)^n\, \int_{\R^{2n}} \widehat{f}(c_{t,\lambda}(a^\prime,b^\prime))\, \tau_\lambda(a^\prime,b^\prime) p_t^\lambda(x,u) \, da^\prime\, db^\prime \\ 
& = e^{-\frac{1}{2} \lambda (\coth 2t\lambda)(x^2+u^2)}\, \int_{\R^{2n}} f(a,b) \tau_\lambda(a,b) p_t^\lambda(-ix,-iu) \, da \,db. 
\end{align*}
But then after throwing the Fourier transform in the integral on the left upon the function $\tau_\lambda(a^\prime,b^\prime) p_t^\lambda(x,u) $ we are left with verifying
\begin{align*}
& c_t(\lambda)^n\, (2\pi)^{-n} \,\int_{\R^{2n}} e^{- \frac{i}{2}  \lambda (\coth t\lambda)(a,b)\cdot (a^\prime,b^\prime)}  \tau_\lambda(a^\prime,b^\prime) p_t^\lambda(x,u) da^\prime\, db^\prime \\ 
& = e^{-\frac{1}{2} \lambda (\coth 2t\lambda)(x^2+u^2)} \, \tau_\lambda(a,b) p_t^\lambda(-ix,-iu). 
\end{align*}
As the kernel $ p_t^\lambda $ is known explicitly, the above can be verified by direct calculation using the identity $ 2 \coth 2t \lambda = (\coth t\lambda + \tanh t\lambda) .$
\end{proof}

\begin{rem} In view of the description \eqref{formula:explicit-expression-intertwining-operator-U-t-lambda} of $ U_{t,\lambda} $ we see that $ U_{t,\lambda}p_t^\lambda = p_t^\lambda.$ Thus $ p_t^\lambda $ is an eigenfunction of the operator $ U_{t,\lambda}.$ Other eigenfunctions of $ U_{t,\lambda} $ can be obtained by considering functions of the form $ f(z) = P(z) p_t^\lambda(z) $ where $ P $ is a bigraded solid harmonic. This follows from the Hecke-Bochner formula for the Fourier transform on $ \R^{2n}.$
\end{rem}

\begin{rem} It is possible to express $ U_{t,\lambda} $ in terms of the symplectic Fourier transform $\mathcal{F}_\lambda$ (see \eqref{def:symplectic-Fourier-transform} for the definition of $\mathcal{F}_\lambda$). In view of the relation $ \mathcal{F}_\lambda f(x,u) = \widehat{f} \left( \frac{1}{2} \lambda(u,-x) \right),$ we have 
$U_{t,\lambda}f(x,u) = c_t(\lambda)^n\, \mathcal{F}_\lambda(f) ((\coth t\lambda)(-u,x)).$ 
\end{rem}


\section{Weyl multipliers and twisted Fock spaces} \label{Sec-Weyl-multipliers-Fock-spaces} 
In this section we introduce the operators $ S_\varphi $ and study their boundedness properties on twisted Fock spaces $\mathcal{F}^\lambda(\C^{2n})$. This is achieved by conjugating them with the twisted Segal-Bargmann transform $ B_{\lambda} = B_{\frac{1}{2},\lambda} $ and relating them with Weyl multipliers.


\subsection{Boundedness of the operators \texorpdfstring{$ S_\varphi$}{}}
With notations as in \Cref{subsec-prelim-twisted-bergman-fock-spaces}, the reproducing formula for the twisted Bergman space $ \mathcal{B}_t^\lambda(\C^{2n}) $  suggests that we consider operators of the form
\begin{align} \label{def:convolution-operator-tiwsted-Bergman-space}
T_{\varphi_0}F(z,w) = \int_{\C^{2n}} F(a,b)  \varphi_0(z-\bar{a}, w-\bar{b}) e^{-\frac{i}{2} \lambda (w \cdot \bar{a}- z \cdot \bar{b})} W_t^\lambda(a,b) \, da \,db 
\end{align} 
where $ \varphi_0 = g \ast_\lambda p_t^\lambda $ is an element of the twisted Bergman space $ \mathcal{B}_t^\lambda(\C^{2n}) $. Note that for $ g = p_t^\lambda,$ we get $\varphi_0= p_{2t}^\lambda $ and $ T_{\varphi_0} = I.$
The operators $ T_{\varphi_0}$ described in \eqref{def:convolution-operator-tiwsted-Bergman-space} give rise to operators $S_\varphi$ on the twisted Fock space $ \mathcal{F}_t^\lambda(\C^{2n}) $, as described in \eqref{def:convolution-operator-tiwsted-fock}. For convenience, we rewrite the expression of $S_\varphi$ here. For any $ \varphi \in \mathcal{F}_t^\lambda(\C^{2n}) $ we consider
\begin{align*} 
S_\varphi F(z,w) = \int_{\C^{2n}} F(a,b)  \varphi(z-\bar{a}, w-\bar{b}) e^{\frac{1}{2}\lambda (\coth 2t\lambda)(z \cdot \bar{a}+w \cdot \bar{b})} e^{-\frac{i}{2} \lambda (w \cdot \bar{a}- z \cdot \bar{b})} w_t^\lambda(a,b) \, da \,db. 
\end{align*} 
The relation between $ S_\varphi $ and $ T_{\varphi_0} $ is given by the easy to verify formula
\begin{align*}
S_\varphi F(z,w) = p_{2t}^\lambda(z,w)^{-1} \, T_{\varphi_0}F_0(z,w) 
\end{align*}
where $ F_0(z,w) = F(z,w) \, p_{2t}^\lambda(z,w) $ and $ \varphi_0(z,w) = \varphi(z,w) \, p_{2t}^\lambda(z,w).$
We are interested in finding conditions on $ \varphi$ so that $ S_\varphi $ is bounded on $ \mathcal{F}_t^\lambda(\C^{2n})$ or equivalently finding conditions on $ \varphi_0 $ so that $ T_{\varphi_0} $ is bounded on $ \mathcal{B}_t^\lambda(\C^{2n}).$\\

As earlier, when $ t = 1/2$ let us write $ B_\lambda $  instead of $ B_{t, \lambda} $ and $ \mathcal{B}^\lambda(\C^{2n}) $ instead of $ \mathcal{B}_t^\lambda(\C^{2n}).$ Then $ B_\lambda : L^2(\R^{2n}) \rightarrow \mathcal{B}^\lambda(\C^{2n}) $ is a unitary operator. An easy calculation shows that operators of the form $ T_{\varphi_0} $ commute with $ \tau_\lambda(a,b) $ for all $ (a,b) \in \R^{2n},$ where $\tau_\lambda (a,b)$ are the twisted translations defined in
\eqref{def:twisted-translation}. This in turn shows that the transferred operators $ B_\lambda^\ast \circ T_{\varphi_0} \circ B_\lambda $  on $ L^2(\R^{2n}) $ also commute with the twisted translations $ \tau_\lambda(a,b)$ for all $(a, b ) \in \R^{2n} .$  This is a consequence of the easily verifiable fact that $ B_\lambda^\ast \circ \tau_\lambda(a,b) \circ B_\lambda = \tau_\lambda(a,b).$ In fact, this follows from the relation \eqref{trans-con}. Consequently, the transferred operator $ B_\lambda^\ast \circ T_{\varphi_0} \circ B_\lambda $ turns out to be a right Weyl multiplier:
$$  (B_\lambda^\ast \circ T_{\varphi_0} \circ B_\lambda) f = T_M f, \quad  \pi_\lambda(T_Mf) = \pi_\lambda(f) M $$
for some linear operator $ M $ on $ L^2(\R^n),$ see \cite{MauceriWeylTransformJFA80}. Thus $ T_{\varphi_0} $ is bounded on $ \mathcal{B}^\lambda(\C^{2n}) $ if and only if the operator $ M $ associated to $ T_{\varphi_0}$ is a bounded linear operator on $ L^2(\R^n).$ \\

This analysis can be transferred to the Fock space setting. On $ \mathcal{F}^\lambda(\C^{2n}) $ consider the operators $ \rho_\lambda(a,b) $ described in \eqref{def:operators-rho-lambda} which we recall here for the reader's convenience:   
$$ \rho_\lambda(a,b)F(z,w) = e^{-i\frac{\lambda}{2}( w\cdot \bar{a}- z \cdot \bar{b})}\, e^{\frac{1}{2}\lambda (\coth \lambda)( z\cdot \bar{a}+w \cdot \bar{b})} \, F(z-a,w-b).$$
With this definition we observe that 
$$p_1^\lambda(a,b)\,  \rho_\lambda(a,b)F(z,w) = p_1^\lambda(z,w)^{-1} \tau_\lambda(a,b)( p_1^\lambda F)(z,w), $$ 
using which we  easily verify that the operators $ S_\varphi $ commute with $ \rho_\lambda(a,b) $ for all $ (a,b) \in \R^{2n}.$ 
Before moving on, let us show the following important fact which we had mentioned in the introduction.

\begin{prop} \label{prop:converse-statement-bdd-linear-op-twisted-fock-space-commuting-with-twisted-translation}
If a bounded linear operator $S$ on the twisted Fock space $ \mathcal{F}^\lambda(\C^{2n}) $ commutes with $ \rho_\lambda(a,b)$ for all $ (a, b)\in \R^{2n},$ then $S = S_\varphi$ for some $\varphi \in \mathcal{F}^\lambda(\C^{2n})$.
\end{prop}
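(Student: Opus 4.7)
The plan is to transfer the problem from $\mathcal{F}^\lambda(\C^{2n})$ to $L^2(\R^{2n})$ through the unitary $G_\lambda$ of \Cref{subsec-prelim-twisted-bergman-fock-spaces}, apply Mauceri's classification of bounded operators on $L^2(\R^{2n})$ that commute with the $\lambda$-twisted translations $\tau_\lambda(a,b)$, and then read off $\varphi$ from the resulting right Weyl multiplier.

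The key preliminary step is the intertwining identity
\[ G_\lambda^{-1}\, \rho_\lambda(a,b)\, G_\lambda \;=\; c_\lambda(a,b)\, \tau_\lambda(a,b), \qquad (a,b) \in \R^{2n}, \]
for some nonzero scalar $c_\lambda(a,b)$. This follows at once from the factorization $G_\lambda = (p_1^\lambda)^{-1} B_\lambda$, the identity $p_1^\lambda(a,b)\, \rho_\lambda(a,b) F = (p_1^\lambda)^{-1} \tau_\lambda(a,b)(p_1^\lambda F)$ displayed just above the proposition, and the intertwining $B_\lambda^{-1}\tau_\lambda(a,b) B_\lambda = \tau_\lambda(a,b)$, itself a consequence of the translation law \eqref{trans-con}; one ends up with $c_\lambda(a,b) = p_1^\lambda(a,b)^{-1}$.

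Now set $T := G_\lambda^{-1}\, S\, G_\lambda$. Since $c_\lambda(a,b)$ is a scalar, the hypothesis that $S$ commutes with $\rho_\lambda(a,b)$ for all $(a,b) \in \R^{2n}$ translates into $T$ being a bounded operator on $L^2(\R^{2n})$ that commutes with $\tau_\lambda(a,b)$ for every such $(a,b)$. By Mauceri's theorem \cite{MauceriWeylTransformJFA80} (recalled in \Cref{subsec-prelim-Weyl-tr-metaplectic-rep}), there exists $M \in B(L^2(\R^n))$ with $\pi_\lambda(Tf) = \pi_\lambda(f)\,M$ for every $f \in L^2(\R^{2n})$; equivalently, $T = T_M$.

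Finally, define $\varphi := G_\lambda(M) \in \mathcal{F}^\lambda(\C^{2n})$ via \eqref{def:Gauss-Bargmann-transform-tiwsted-fock}; this is legitimate since any bounded $M$ lies in $\mathcal{S}(\Gamma_{1/2})$, on which $G_\lambda$ is an isometric isomorphism onto $\mathcal{F}^\lambda(\C^{2n})$. The dictionary recalled immediately before the proposition---$S_\varphi F = (p_1^\lambda)^{-1}\, T_{\varphi_0}(p_1^\lambda F)$ with $\varphi_0 = p_1^\lambda \varphi$, combined with $B_\lambda^{-1} T_{\varphi_0} B_\lambda = T_M$---then yields $S_\varphi = G_\lambda T_M G_\lambda^{-1} = S$, as required. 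The only genuinely delicate point is the intertwining identity in the second paragraph; after that, Mauceri's theorem does the structural work and the final identification of $\varphi$ is a bookkeeping exercise using the dictionary already established.
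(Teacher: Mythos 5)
Your proof is correct, and it takes a genuinely different route from the one in the paper. The paper argues entirely inside $\mathcal{F}^\lambda(\C^{2n})$: it represents $S$ through its integral kernel $K_S((z,w),(\bar a,\bar b)) = (SK_{(\bar a,\bar b)})(z,w)$, feeds the commutation hypothesis into this representation to get a functional equation for $K_S$, normalizes by the reproducing kernel to show the normalized kernel $k_S$ depends only on the difference $(z-\bar a, w-\bar b)$, and then reads off $\varphi$ (finally observing $\varphi = S1 \in \mathcal{F}^\lambda(\C^{2n})$). You instead transfer the whole problem through the unitary $G_\lambda$ to $L^2(\R^{2n})$, reduce to the classification of bounded operators commuting with the twisted translations $\tau_\lambda(a,b)$ (Mauceri), and then translate the right Weyl multiplier $T_M$ back into the Fock picture using the dictionary already worked out for Theorem \ref{thm:convolution-operator-twisted-fock}. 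Both are sound. The paper's argument is self-contained and in effect re-derives a Fock-space analogue of the twisted-translation-invariance structure theorem from scratch, whereas yours explicitly reuses Mauceri's theorem and the isometry $G_\lambda : \mathcal{S}(\Gamma_{1/2}) \to \mathcal{F}^\lambda(\C^{2n})$, which is conceptually cleaner given that both facts are already in play in Section 3 but is correspondingly less elementary. One small correction: the intertwining constant you compute, $c_\lambda(a,b) = p_1^\lambda(a,b)^{-1}$, inherits a harmless normalization slip from the displayed identity in the paper just before the Proposition; the actual value is
\[
c_\lambda(a,b) \;=\; e^{\frac{1}{4}\lambda(\coth\lambda)(|a|^2+|b|^2)} \;=\; (4\pi)^{-n}\Bigl(\tfrac{\lambda}{\sinh\lambda}\Bigr)^{n}\, p_1^\lambda(a,b)^{-1},
\]
which differs from yours only by the absolute constant $(4\pi)^{-n}(\lambda/\sinh\lambda)^n$. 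Since all your argument uses is that $c_\lambda(a,b)$ is a nonzero scalar, this does not affect the conclusion, but it is worth recording correctly.
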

\begin{proof}
Recall from \eqref{formula:reproducing-twisted-Fock-space} that for $ F \in \mathcal{F}^\lambda(\C^{2n})$ we have  
$$ F(z,w) = \int_{\C^{2n}} F(a,b) \, K_{(\bar{a},\bar{b})}(z,w) \, da\, db, $$ 
where the reproducing kernel $ K $ is given by
$$ K_{(\bar{a},\bar{b})}(z,w) = e^{\frac{1}{2}\lambda (\coth \lambda)(z \cdot \bar{a}+w \cdot \bar{b})} e^{-\frac{i}{2} \lambda (w \cdot \bar{a}- z \cdot \bar{b})}. $$
Since $S$ is a bounded linear operator on $ \mathcal{F}^\lambda(\C^{2n}) $, it follows that 
$$ SF(z,w) = \int_{\C^{2n}} F(a,b) \, K_S \left( (z,w), (\bar{a},\bar{b}) \right)  da\, db $$ 
where $K_S \left( (z,w), (\bar{a},\bar{b}) \right) = \left( S K_{(\bar{a},\bar{b})} \right) (z,w)$. 
As $ S $ commutes with $ \rho_\lambda(a',b') $ for all $ (a',b') \in \R^{2n}$, we have the identity
\begin{align*}
& e^{-i\frac{\lambda}{2}( w \cdot a'- z \cdot b')}\, e^{\frac{1}{2}\lambda (\coth \lambda)( z \cdot a' + w \cdot b')} \, SF(z-a',w-b') \\ 
& = \int_{\C^{2n}} e^{-i\frac{\lambda}{2}( b \cdot a' - a \cdot b')} \, e^{\frac{1}{2}\lambda (\coth \lambda)( a \cdot a' + b \cdot b')} \, F(a-a',b-b') \, K_S \left( (z,w), (\bar{a},\bar{b}) \right) da \, db 
\end{align*} 
for every $F \in \mathcal{F}^\lambda(\C^{2n})$ and $ (a',b') \in \R^{2n}$. This  implies that the kernel $ K_S $ satisfies
\begin{align}
& e^{-i\frac{\lambda}{2}( w \cdot a'- z \cdot b')}\, e^{\frac{1}{2}\lambda (\coth \lambda)( z \cdot a' + w \cdot b')} \, K_S \left( (z-a',w-b'), (\bar{a},\bar{b}) \right) \label{calc-relation-1} \\ 
\nonumber & = e^{-i\frac{\lambda}{2}( b \cdot a' - a \cdot b')} \, e^{\frac{1}{2}\lambda (\coth \lambda)( a \cdot a' + b \cdot b')} \, e^{\frac{1}{2}\lambda (\coth \lambda)( |a'|^2 + |b'|^2)} \, K_S \left( (z,w), (\bar{a}+a',\bar{b}+b') \right). 
\end{align} 

If we define $k_S \left( (z,w), (\bar{a},\bar{b}) \right) = e^{-\frac{1}{2}\lambda (\coth \lambda)(z \cdot \bar{a}+w \cdot \bar{b})} e^{\frac{i}{2} \lambda (w \cdot \bar{a}- z \cdot \bar{b})} K_S \left( (z,w), (\bar{a},\bar{b}) \right),$ then from \eqref{calc-relation-1} we get the relation
$$ k_S((z-a',w-b'), (a,b)) = k_S((z,w), (a+a',b+b')) $$ 
for all $z,w,a,b \in \C^n$ and $a',b' \in \R^n$, which in turn implies  that
$$ k_S((z,w), (a,b)) = k_S((z-a,w-b), (0,0)) .$$ 
Therefore, we can define $\varphi = k_S((z,w), (0,0))$ so that   
$$ K_S \left( (z,w), (\bar{a},\bar{b}) \right) = e^{\frac{1}{2}\lambda (\coth \lambda)(z \cdot \bar{a}+w \cdot \bar{b})} e^{-\frac{i}{2} \lambda (w \cdot \bar{a}- z \cdot \bar{b})} \varphi(z-\bar{a}, w-\bar{b}) .$$
This proves the claim that the operator $S$ is nothing but $S_\varphi$ that was defined in \eqref{def:convolution-operator-tiwsted-fock}. Finally, it follows from the reproducing formula that $\varphi = S1$ and hence $\varphi \in \mathcal{F}^\lambda(\C^{2n}).$
\end{proof}

\medskip 
Now, using the relation $ S_\varphi F(z,w) = p_1^\lambda(z,w)^{-1} T_{\varphi_0}(p_1^\lambda F) (z,w) $ we see that  each bounded linear operator $S_\varphi $ on $ \mathcal{F}^\lambda(\C^{2n}) $ corresponds to a Weyl multiplier $ T_M.$ In what follows we find a relation between the symbol $ \varphi $ and the multiplier $M.$ \\

As usual, let $ B(L^2(\R^n)) $ denote the Banach algebra of all bounded linear operators on $ L^2(\R^n).$ For each $ M \in B(L^2(\R^n)) $ we define
$$ G_\lambda(M)(z,w) = p_1^\lambda(z,w)^{-1}\, \tr\left( \pi_\lambda(-z,-w) e^{-\frac{1}{2} H(\lambda)} M e^{-\frac{1}{2}H(\lambda)} \right) .$$
As $ e^{-\frac{1}{2} H(\lambda)} M $ is Hilbert-Schmidt, we can find $ g \in L^2(\C^n) $ so that $ \pi_\lambda(g) = e^{-\frac{1}{2}H(\lambda)} M $ and hence in view of the inversion formula for the Weyl transform, $ G_\lambda(M)(z,w) $ is the holomorphic extension of $ p_1^\lambda(x,u)^{-1}\,g \ast_\lambda p_{1/2}^\lambda(x,u).$ Thus we see that $ G_\lambda $ takes $ B(L^2(\R^n)) $ into $ \mathcal{F}^\lambda(\C^{2n}).$ Moreover,
$$ \int_{\C^{2n}} |G_\lambda M(z,w)|^2 \, w_\lambda(z,w) \, dz\, dw = \left\| e^{-\frac{1}{2}H(\lambda)} M \right\|_{HS}^2.$$
We can now find a relation between the kernel $ \varphi $ and the operator $ M.$ 

\medskip 
\begin{proof}[\bf Proof of Theorem \ref{thm:convolution-operator-twisted-fock}] 
In view of the relation $ S_\varphi F(z,w) = p_{1}^\lambda(z,w)^{-1} T_{\varphi_0}F_0(z,w) $
with  $ F_0 = F\, p_{1}^\lambda $ and $ \varphi_0 = \varphi \, p_{1}^\lambda,$  it is enough to consider $ T_{\varphi_0}$ acting on the twisted Bergman space.
Let us apply the operator $ T_{\varphi_0} = B_\lambda \circ T_M \circ B_\lambda^\ast $ on the reproducing kernel associated to the space $ \mathcal{B}^\lambda(\C^{2n}).$ Since 
the reproducing kernel can be written as (put $t=1/2$ in \eqref{formula:reproducing-kernel-twisted-Bergman-space}) 
$$ p_1^\lambda(z-\bar{a}, w-\bar{b}) e^{-i\frac{\lambda}{2}(w \cdot \bar{b}- z\cdot \bar{a})} = \left(\tau_\lambda(\bar{a},\bar{b})p_{1/2}^\lambda \right)\ast_\lambda p_{1/2}^\lambda(z,w) $$
it follows that
$$ T_{\varphi_0} \left( \tau_\lambda(\bar{a},\bar{b})p_1^\lambda \right)(z,w) = B_\lambda \left( T_M \tau_\lambda(\bar{a},\bar{b}) p_{1/2}^\lambda \right)(z,w). $$
The right hand side can be calculated using the definition of $ B_\lambda $ in terms of the Weyl transform, namely
$$ B_\lambda f(z,w) = \tr \left( \pi_\lambda(-z,-w) \pi_\lambda(f) e^{-\frac{1}{2}H(\lambda)}\right).$$
Using the relation $ \pi_\lambda(a,b) \pi_\lambda(f) = \pi_\lambda( \tau_\lambda(a,b)f) $ (see the discussion around \eqref{def:twisted-translation}), by a simple calculation one can verify that
$$ B_\lambda \left( T_M \tau_\lambda(\bar{a},\bar{b}) p_{1/2}^\lambda \right)(z,w) = \tr \left( \pi_\lambda(\bar{a}, \bar{b}) \pi_\lambda(-z,-w) e^{-\frac{1}{2}H(\lambda)} M e^{-\frac{1}{2}H(\lambda)} \right). $$
This in turn shows that the kernel of the operator $ T_{\varphi_0} $ is given by
$$\tr \left( \pi_\lambda(-z+\bar{a}, -w+\bar{b}) e^{-tH(\lambda)} M e^{-tH(\lambda)} \right) \,e^{-\frac{i}{2} \lambda (w \cdot \bar{a}- z \cdot \bar{b})}.$$
Thus we obtain the relation 
$$ \varphi_0(z,w) = tr \left( \pi_\lambda(-z, -w) e^{-\frac{1}{2}H(\lambda)} M e^{-\frac{1}{2}H(\lambda)} \right).$$
As $ \varphi(z,w)  = p_{1}^\lambda(z,w)^{-1}\varphi_0(z,w) \, ,$ recalling the definition of $ G_\lambda $ from \eqref{def:Gauss-Bargmann-transform-tiwsted-fock}, we deduce that $ \varphi = G_\lambda(M),$ which completes the proof of Theorem \ref{thm:convolution-operator-twisted-fock}. 
\end{proof}

\begin{rem} Using the fact that $ T_M f = f \ast_\lambda \Lambda $ for a tempered distribution $ \Lambda $ satisfying $ \pi_\lambda(\Lambda) = M $ we see that $T_{\varphi_0} $ is bounded on $ \mathcal{B}^\lambda(\C^{2n}) $ if and only if $ \varphi_0(z,w) = p_{1/2}^\lambda \ast_\lambda \Lambda \ast_\lambda p_{1/2}^\lambda(z,w) $ where $ \pi_\lambda(\Lambda) $ is a bounded linear operator. 
\end{rem}

Let $ \mathcal{S}(\Gamma_t) $ be the space of all linear operators $ T $ on $ L^2(\R^n) $ such that $ e^{-tH(\lambda)} T$ is Hilbert-Schmidt equipped with the norm $ \|T\|_{(t)} = \left\| e^{-tH(\lambda)} T \right\|_{HS}.$ We define $ G_t^\lambda $ on $ \mathcal{S}(\Gamma_t) $ by the prescription
$$ G_t^\lambda T(z,w) = p_{2t}^\lambda(z,w)^{-1} \, \tr\big( \pi_\lambda(-z,-w) e^{-tH(\lambda)}T e^{-tH(\lambda)} \big) .$$
Then $ G_t^\lambda $ takes $ \mathcal{S}(\Gamma_t) $ onto the twisted Fock space $ \mathcal{F}_t^\lambda(\C^{2n}) .$ Indeed, as $ e^{-tH(\lambda)} T= \pi_\lambda(g) $ for some $ g \in L^2(\C^n) $ it follows that
$$\tr\big( \pi_\lambda(z,w) e^{-tH(\lambda)}T e^{-tH(\lambda)} \big) = g \ast_\lambda p_t^\lambda(-z,-w) $$ 
which belongs to $ \mathcal{B}_t^\lambda(\C^{2n}).$ Moreover,
$$ \int_{\C^{2n}} |G_t^\lambda T(z,w)|^2 \, w_t^\lambda(z,w) \, dz \, dw = \left\| e^{-tH(\lambda)} T \right\|_{HS}^2 = \| T\|_{(t)}^2.$$
It is also easy to see that $ G_t^\lambda $ is surjective: given $ F \in \mathcal{F}_t^\lambda(\C^{2n}) $ we have 
$$ F(-z,-w) \, p_{2t}^\lambda(z,w) = g \ast_\lambda p_t^\lambda(z,w) $$ for some $ g \in L^2(\C^n).$ Therefore, if we define $ T  = e^{tH(\lambda)}\,\pi_\lambda(g) , $ then it follows that $ F = G_t^\lambda (T).$ 

\begin{rem} \label{rem:adjoint-operator-G-lambda}
We observe that $ G_\lambda $ is the restriction of $ G_{1/2}^\lambda $ to $ B(L^2(\R^n)) \subset  \mathcal{S}(\Gamma_{1/2}).$  In other words, $ G_\lambda $ initially defined on $ B(L^2(\R^n)) $ has a natural extension to 
$ \mathcal{S}(\Gamma_{1/2}).$ Given $ F \in \mathcal{F}^\lambda(\C^{2n}) $ let $ F_0(z,w) = F(z,w) \, p_{1}^\lambda(z,w) $ be the corresponding member of $ \mathcal{B}^\lambda(\C^{2n}).$  We see that if we let $ T = e^{\frac{1}{2}H(\lambda)} \pi_\lambda(F_0) e^{\frac{1}{2}H(\lambda)} $
then $ G_\lambda(T) = F.$ Thus the adjoint of $ G_\lambda $ is given by the formula $ G_\lambda^\ast (F) = e^{\frac{1}{2}H(\lambda)} \pi_\lambda(F_0) e^{\frac{1}{2}H(\lambda)} .$
\end{rem}

\medskip 
\begin{proof}[\bf Proof of Corollary \ref{cor:convolution-operator-twisted-fock-via-Gauss-Bargmann-transform}] In view of the above, we can easily calculate $ G_\lambda^\ast(S_\varphi F).$   As in the proof of Theorem \ref{thm:convolution-operator-twisted-fock} we make use of the relation
$ S_\varphi F(z,w) = p_{1}^\lambda(z,w)^{-1} T_{\varphi_0}F_0(z,w) $
with  $ F_0 = F\, p_{1}^\lambda $ and $ \varphi_0 = \varphi \, p_{1}^\lambda.$ 
Therefore, in view of the Remark \ref{rem:adjoint-operator-G-lambda}, it follows that 
$$ G_\lambda^\ast (S_\varphi F) = e^{\frac{1}{2}H(\lambda)} \pi_\lambda(T_{\varphi_0}F_0) e^{\frac{1}{2}H(\lambda)} .$$
If $ F_0 = B_\lambda f,$ then as $ T_{\varphi_0} = B_\lambda \circ T_M \circ B_\lambda^\ast ,$ we have 
$$  e^{\frac{1}{2}H(\lambda)} \pi_\lambda(T_{\varphi_0}F_0) e^{\frac{1}{2}H(\lambda)} = e^{\frac{1}{2}H(\lambda)} \pi_\lambda(f) M.$$
 As $ G_\lambda^\ast(F) = e^{\frac{1}{2}H(\lambda)} \pi_\lambda(F_0) e^{\frac{1}{2}H(\lambda)} = e^{\frac{1}{2}H(\lambda)} \pi_\lambda(f) $ it follows that we have the relations
$$ G_\lambda^\ast(S_\varphi F) = (G_\lambda^\ast F) \circ  M, \quad S_\varphi F = G_\lambda\left( G_\lambda^\ast F \circ  M\right).$$
Since $ M = G_\lambda^\ast(\varphi) $, we have $ G_\lambda^\ast(S_\varphi F) = (G_\lambda^\ast F) \circ G_\lambda^\ast(\varphi),$ and this completes the proof of Corollary \ref{cor:convolution-operator-twisted-fock-via-Gauss-Bargmann-transform}. 
\end{proof}


\begin{rem} We can also consider $ G_t^\lambda $ as a map defined on a space of distributions. Let $ L^2(\R^{2n}, \Gamma_t) $ be the space of all tempered distributions $ \Lambda $ for which $ p_t^\lambda \ast_\lambda \Lambda  \in L^2(\R^{2n})$ and equip this space with the norm $ \| \Lambda\|_{(t)} = \| p_t^\lambda \ast_\lambda \Lambda \|_2.$ It is clear that $ L^2(\R^{2n}) \subset L^2(\R^{2n}, \Gamma_t) .$  On this space we define $ G_{t,\lambda} (\Lambda)(z,w) = 
p_{2t}^\lambda(z,w)^{-1}\, p_t^\lambda \ast_\lambda \Lambda \ast_\lambda p_t^\lambda(z,w)$ so that $ G_{t,\lambda} $ takes $ L^2(\R^{2n}, \Gamma_t) $
into $ \mathcal{F}_t^\lambda(\C^{2n}).$ The operator $ S_\varphi $ is bounded if and only if $ \varphi = G_{t,\lambda} (\Lambda) $ where $ \pi_\lambda(\Lambda) $ is a bounded linear operator.
\end{rem}


\subsection{An algebra of entire functions} 
\label{subsection-algebra-entire-functions} 
Let $ \mathcal{A^\lambda}(\C^{2n}) $ stand for the subspace of $ \mathcal{F}^\lambda(\C^{2n}) $ consisting of those  $ \varphi $ for which $ S_\varphi $ is bounded. In view of Theorem \ref{thm:convolution-operator-twisted-fock} we know that $ \varphi \in \mathcal{A^\lambda}(\C^{2n}) $ if and only if $ \varphi = G_\lambda M$ for some $ M \in B(L^2(\R^n)).$ Thus $\mathcal{A^\lambda}(\C^{2n}) $ is the image of $ B(L^2(\R^n)) $ under $ G_\lambda $ which turns out to be a Banach algebra under a suitable convolution. \\

The result of Corollary \ref{cor:convolution-operator-twisted-fock-via-Gauss-Bargmann-transform} suggests that for $ F, \varphi \in \mathcal{A^\lambda}(\C^{2n}) $ we define $ F \ast_\lambda \varphi = S_\varphi F:$ 
$$  F \ast_\lambda \varphi(z,w) = \int_{\C^{2n}}  F(a,b)  \varphi(z-\bar{a}, w-\bar{b}) e^{\frac{1}{2}\lambda (\coth \lambda)(z \cdot \bar{a}+w \cdot \bar{b})} e^{-\frac{i}{2} \lambda (w \cdot \bar{a}- z \cdot \bar{b})} w_t^\lambda(a,b) da \,db .$$ 
Observe that in view of Corollary \ref{cor:convolution-operator-twisted-fock-via-Gauss-Bargmann-transform} we have $ G_\lambda^\ast ( F \ast_\lambda \varphi) = G_\lambda^\ast (F) \, G_\lambda^\ast(\varphi) $ and hence it is clear that $ \mathcal{A^\lambda}(\C^{2n}) $ is a non-commutative  algebra under this convolution. Inside this algebra there is a commutative subalgebra which is easy to describe. For a bounded function $ m $ on $ \R $ let us consider the following bounded operator on $L^2(\R^n)$:  
$$ m(H(\lambda)) = \sum_{k=0}^\infty m((2k+n)|\lambda|) \, P_k(\lambda)$$ 
where $ P_k(\lambda) $ are the spectral projections associated to the Hermite operator $ H(\lambda).$ If we define
$$ \mathcal{A}_0^\lambda(\C^{2n}) = \{ \varphi \in \mathcal{A}^\lambda: G_\lambda^\ast(\varphi) =m(H(\lambda)) \} $$
it follows that $\mathcal{A}_0^\lambda(\C^{2n})$ is a commutative subalgebra. Moreover, every $ \varphi \in \mathcal{A}_0^\lambda(\C^{2n}) $ is given by the formula
$$ \varphi(z,w) = (2\pi)^{-n} |\lambda|^n \sum_{k=0}^\infty m((2k+n)|\lambda|) \, e^{-(2k+n)|\lambda|}\, \varphi_{k,\lambda}^{n-1}(z,w)$$
where $\varphi_{k,\lambda}^{n-1}(x,u)$ are dilated Laguerre functions of type $(n-1).$
It would be interesting to study some properties of this subalgebra such as Gelfand spectrum etc. We plan to return to such questions in a subsequent paper.


\section{The uncertainty principle} \label{Sec-proof-main-results}
In this section we present a proof of Theorem \ref{thm:uncertainty-twisted-fock-spaces}. Assuming that both $ S_\varphi$ and $ \widetilde{S}_\varphi $ are bounded on $ \mathcal{F}^\lambda(\C^{2n}) $  we will show that $ \varphi $ reduces to a constant. In view of the relation \eqref{relation:two-convolutions-operators}, which says that $ U^\ast \circ S_{U \varphi} \circ U = \widetilde{S}_\varphi$, we have a situation where both $ S_\varphi $ and $ S_{U\varphi} $ are bounded on $ \mathcal{F}^\lambda.$ In view of Theorem 1.1 both $ \varphi $ and $ U\varphi $ belong to $ \mathcal{A}_\lambda(\C^{2n}) $ and hence we  have a function $ f \in L^2(\R^{2n}) $ such that $ \pi_\lambda(f) = e^{-\frac{1}{2}H(\lambda)} M_1 $ and $ \pi_\lambda(U_\lambda f ) = e^{-\frac{1}{2}H(\lambda)} M_2 $ with $ M_1, M_2 \in B(L^2(\R^n)).$ Recall from \eqref{convolution-property-U-t-lambda} and \eqref{commuting-property-U-t-lambda} the following relation between $ f $ and $ U_\lambda f :$ 
$$U_\lambda f \ast_\lambda p_{1/2}^\lambda (x,u) = e^{-\frac{\lambda}{2}(\coth \lambda)(x^2+u^2)} \, f \ast_\lambda p_{1/2}^\lambda(-ix,-iu).$$
We would like to prove Theorem \ref{thm:uncertainty-twisted-fock-spaces} by making use of the following Hardy's theorem for the Weyl transform \cite[Theorem 2.9.5]{Book-Thangavelu-uncertainty} applied to the function $ g =  U_\lambda f \ast_\lambda p_{1/2}^\lambda.$ 

\begin{thm} \label{thm:Hardy-thm-Weyl-transform}
Let $ g \in L^2(\R^{2n}) $ be such that $ |g(x,u)| \leq C\, p_t^\lambda(x,u) $ and $ \pi_\lambda(g)^\ast \pi_\lambda(g) \leq C\, H(\lambda)^r\, e^{-2tH(\lambda)} $ for some constant $ C>0$ and $ r \in \mathbb{N}.$ Then $ g $ is a constant multiple of $ p_t^\lambda.$
\end{thm}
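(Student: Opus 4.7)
The plan is to reduce the theorem to the classical Hardy uncertainty principle via the isotypic decomposition of $L^2(\R^{2n})$ under the $U(n)$-action, using a Hecke--Bochner identity for the Weyl transform. First, I would decompose $g$ using the bigraded spherical harmonics from Section~2.4: writing $z = x + iu \in \C^n$,
\[ g(z) = \sum_{p,q \geq 0} \sum_{j=1}^{d(\delta_{p,q})} P_j^{p,q}(z) \, h_{j,p,q}(|z|^2). \]
Integrating the pointwise hypothesis $|g(z)| \leq C\, p_t^\lambda(z)$ against each $P_j^{p,q}$ on concentric spheres and using orthonormality of the spherical harmonic basis yields
\[ |h_{j,p,q}(r^2)| \, r^{p+q} \leq C_{j,p,q}\, e^{-\tfrac{\lambda}{4}(\coth \lambda t)\, r^2}, \qquad r \geq 0. \]

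Next, I would translate the operator-side hypothesis. A Hecke--Bochner identity for the Weyl transform asserts
\[ \pi_\lambda \bigl( P_j^{p,q}(\cdot)\, h_{j,p,q}(|\cdot|^2) \bigr) = W_\lambda(P_j^{p,q}) \circ m_{j,p,q}(H(\lambda)), \]
where the scalars $m_{j,p,q}((2k+n+p+q)|\lambda|) = c_k^{(j,p,q)}$ are, up to explicit normalising constants, the Laguerre coefficients of $h_{j,p,q}$ in the basis of type $n+p+q-1$. Using Geller's orthogonality from \Cref{subsec-prelim-operator-sph-harmonics} together with the invariance of the Hermite eigenspaces under $W_\lambda(P_j^{p,q})$, the hypothesis $\pi_\lambda(g)^\ast \pi_\lambda(g) \leq C\, H(\lambda)^r\, e^{-2tH(\lambda)}$ decouples across bigraded components and produces the coefficient bound
\[ |c_k^{(j,p,q)}|^2 \leq C'_{j,p,q}\, ((2k+n)|\lambda|)^r\, e^{-2t(2k+n)|\lambda|}. \]

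Finally, I would run the scalar Hardy argument on each bigraded component. Packaging the Laguerre sequence $\{c_k^{(j,p,q)}\}$ via the Hardy--Hille generating identity produces an entire function whose growth on the real line is controlled by Step~1 and whose holomorphic extension is controlled by Step~2. The product of the two Gaussian exponents attains Hardy's critical threshold precisely when $(p,q) = (0,0)$ and strictly exceeds it otherwise, thanks to the extra factor $r^{p+q}$ in the pointwise bound and the shift $p+q$ in the Laguerre parameter. Classical Hardy then forces $h_{j,p,q} \equiv 0$ for $(p,q) \neq (0,0)$ and pins down $h_{1,0,0}$ as a scalar multiple of the radial Gaussian $p_t^\lambda(r,0)$, yielding $g = c\, p_t^\lambda$.

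The main obstacle is the constant bookkeeping in the last step: one must verify that the rate $\tfrac{\lambda}{4}\coth(\lambda t)$ from the pointwise side and the rate $t$ from the operator side align with Hardy's critical threshold after the Hardy--Hille repackaging, and that the alignment is strict for $(p,q) \neq (0,0)$. The polynomial slack $H(\lambda)^r$ is absorbed harmlessly, since only the Gaussian exponent matters at the Hardy critical case; a minor technical point is that Hardy's classical statement must be applied in the sharp form allowing polynomial-times-Gaussian on one side, which still forces a pure Gaussian conclusion at the critical exponent.
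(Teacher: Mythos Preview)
The paper does not supply a proof of this statement at all: Theorem~\ref{thm:Hardy-thm-Weyl-transform} is quoted verbatim as \cite[Theorem~2.9.5]{Book-Thangavelu-uncertainty} and used as a black box in the proof of Theorem~\ref{thm:uncertainty-twisted-fock-spaces}. There is therefore no ``paper's own proof'' to compare against.

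That said, your sketch is essentially the argument given in the cited reference. The proof there also proceeds by decomposing $g$ into its $U(n)$-isotypic pieces $P_j^{p,q}(z)\,h_{j,p,q}(|z|^2)$, invoking the Hecke--Bochner identity for the Weyl transform to express each piece as $W_\lambda(P_j^{p,q})\,m_{j,p,q}(H(\lambda))$, and then feeding the resulting Gaussian bounds on the radial factors and on the Laguerre coefficients into a one-dimensional Hardy argument. Your identification of the ``main obstacle'' is accurate: the delicate point is precisely the matching of the rate $\tfrac{\lambda}{4}\coth(\lambda t)$ from the pointwise side with the rate $t$ from the operator side via the Hardy--Hille generating function, and checking that the critical case is attained only at $(p,q)=(0,0)$. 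The polynomial factor $H(\lambda)^r$ is indeed harmless, being absorbed into the polynomial slack permitted in the sharp form of Hardy's theorem. One small caution: the Hecke--Bochner formula you invoke, $\pi_\lambda\bigl(P\,h(|\cdot|^2)\bigr) = W_\lambda(P)\,m(H(\lambda))$, requires care with the Laguerre type; the radial multiplier $m$ involves Laguerre functions of type $n+p+q-1$ rather than $n-1$, and the shift in type is what produces the strict inequality needed to kill the nontrivial components.
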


In order to make use of Theorem \ref{thm:Hardy-thm-Weyl-transform}, we need to check if our $ g $ satisfies both conditions appearing in Theorem \ref{thm:Hardy-thm-Weyl-transform} with $ t =1.$  The assumption $ \pi_\lambda(U_\lambda f ) = e^{-\frac{1}{2}H(\lambda)} M_2 $ gives us $ \pi_\lambda(g) = e^{-\frac{1}{2}H(\lambda)} M_2 e^{-\frac{1}{2}H(\lambda)} $ but unfortunately this does not give us an estimate of the form $ \pi_\lambda(g)^\ast \pi_\lambda(g) \leq C\, e^{-2H(\lambda)} $ unless $ M_2 $ commutes with $H(\lambda).$ However, getting a pointwise estimate on $ g $ is relatively easy.

\begin{prop} \label{prop:uncertainty-pointwise-bound}
With notations as above under the assumption  that $ M_1$  is a bounded linear operator on $ L^2(\R^n) ,$  we have  $ |g(x,u)| \leq C \, p_1^\lambda(x,u) .$ If we further assume that $ M_2 $ commutes with $ H(\lambda)$ we also have $ \pi_\lambda(g)^\ast \pi_\lambda(g) \leq C\, e^{-2H(\lambda)}. $
\end{prop}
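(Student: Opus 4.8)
The plan is to prove the two assertions separately, since they require different inputs. For the pointwise bound on $g = U_\lambda f \ast_\lambda p_{1/2}^\lambda$, I would start from the defining relation recalled just before the proposition,
$$ g(x,u) = U_\lambda f \ast_\lambda p_{1/2}^\lambda(x,u) = e^{-\frac{\lambda}{2}(\coth \lambda)(x^2+u^2)}\, f \ast_\lambda p_{1/2}^\lambda(-ix,-iu), $$
so that controlling $|g|$ amounts to controlling the holomorphic extension of $f \ast_\lambda p_{1/2}^\lambda$ evaluated at the purely imaginary point $(-ix,-iu)$. Now $f \ast_\lambda p_{1/2}^\lambda(-ix,-iu)$ is, up to the factor $p_1^\lambda$, exactly $G_\lambda(M_1)$ (equivalently, $\varphi_0 = \varphi\, p_1^\lambda = \operatorname{tr}(\pi_\lambda(-z,-w)e^{-\frac12 H(\lambda)}M_1 e^{-\frac12 H(\lambda)})$ evaluated at the holomorphic point). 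Writing $\pi_\lambda(f) = e^{-\frac12 H(\lambda)}M_1$ and using the trace formula, I expand $g$ as a trace $\operatorname{tr}(\pi_\lambda(-z,-w) e^{-\frac12 H(\lambda)} M_1 e^{-\frac12 H(\lambda)})$ at $(z,w) = (ix,iu)$, bound it by $\|M_1\|$ times $\|e^{-\frac12 H(\lambda)}\pi_\lambda(ix,iu)\|_{HS}\,\|e^{-\frac12 H(\lambda)}\|_{HS}$ or by a Cauchy--Schwarz split of the trace, and compute the resulting Hilbert--Schmidt norm explicitly using that the kernel of $e^{-\frac12 H(\lambda)}$ is the Mehler kernel and $\pi_\lambda(ix,iu)$ acts by a Gaussian-times-translation. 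The Gaussian integrals then produce precisely a constant multiple of $p_1^\lambda(x,u)$ after absorbing the prefactor $e^{-\frac{\lambda}{2}(\coth\lambda)(x^2+u^2)}$; the identity $2\coth 2\lambda = \coth\lambda + \tanh\lambda$ (already invoked in Proposition \ref{prop:intertwining-operator-U-t-lambda}) is what makes the exponents match.

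For the operator inequality, the hypothesis $\pi_\lambda(U_\lambda f) = e^{-\frac12 H(\lambda)} M_2$ together with the convolution identity $\pi_\lambda(g) = \pi_\lambda(U_\lambda f \ast_\lambda p_{1/2}^\lambda) = \pi_\lambda(U_\lambda f)\,\pi_\lambda(p_{1/2}^\lambda) = e^{-\frac12 H(\lambda)} M_2\, e^{-\frac12 H(\lambda)}$ gives
$$ \pi_\lambda(g)^\ast \pi_\lambda(g) = e^{-\frac12 H(\lambda)}\, M_2^\ast\, e^{-H(\lambda)}\, M_2\, e^{-\frac12 H(\lambda)}. $$
If $M_2$ commutes with $H(\lambda)$, then it commutes with $e^{-\frac12 H(\lambda)}$, so the right-hand side equals $M_2^\ast M_2\, e^{-2H(\lambda)} \le \|M_2\|^2\, e^{-2H(\lambda)}$, which is the claimed bound with $C = \|M_2\|^2$. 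This step is essentially immediate once the commutation is assumed; the only thing to be careful about is that the functional calculus manipulations are legitimate, i.e.\ all operators involved are bounded and $e^{-H(\lambda)}$ commutes with $M_2$ and $M_2^\ast$, which follows from $M_2 H(\lambda) = H(\lambda) M_2$ by passing to the spectral resolution of $H(\lambda)$.

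The main obstacle is the first part: turning the abstract trace expression for $g$ at an imaginary argument into the sharp Gaussian bound $C\,p_1^\lambda(x,u)$. The danger is picking up an extra polynomial factor in $(x,u)$ or a mismatched Gaussian exponent; both pitfalls are avoided by doing the Hilbert--Schmidt computation with the exact Mehler kernel rather than a crude operator-norm estimate, and then checking the exponent bookkeeping against $p_1^\lambda(x,u) = (4\pi)^{-n}(\lambda/\sinh\lambda)^n e^{-\frac14\lambda(\coth\lambda)(|x|^2+|u|^2)}$. The boundedness of $M_1$ is exactly what is needed to pull the operator out of the trace with a uniform constant; without it one would only get a bound involving $\|e^{-\frac12 H(\lambda)}M_1\|_{HS}$, which is still finite but would not be uniform in the relevant sense. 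Once Proposition \ref{prop:uncertainty-pointwise-bound} is in hand, Theorem \ref{thm:Hardy-thm-Weyl-transform} applies to $g$ (with $t=1$, and $r=0$) under the extra commutation hypothesis, forcing $g$ to be a constant multiple of $p_1^\lambda$ and hence, tracing back through $U_\lambda$ and $G_\lambda$, forcing $\varphi$ to be constant.
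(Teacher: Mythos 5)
Your handling of the operator bound is the same as the paper's and is fine: once $M_2$ commutes with $H(\lambda)$ it commutes with $e^{-\frac12 H(\lambda)}$, $\pi_\lambda(g)=M_2 e^{-H(\lambda)}$, and the quadratic-form estimate is immediate. The gap is in the pointwise bound: the specific Cauchy--Schwarz split you write down is lossy. If you bound
$$ \left|\tr\left(\pi_\lambda(-ix,-iu)\, e^{-\frac12 H(\lambda)} M_1\, e^{-\frac12 H(\lambda)}\right)\right| \leq \|M_1\| \left\| e^{-\frac12 H(\lambda)} \pi_\lambda(-ix,-iu) \right\|_{HS} \left\| e^{-\frac12 H(\lambda)} \right\|_{HS} $$
and use that $\pi_\lambda(-ix,-iu)$ is selfadjoint with $\pi_\lambda(-ix,-iu)^2=\pi_\lambda(-2ix,-2iu)$, you compute $\left\| e^{-\frac12 H(\lambda)}\pi_\lambda(-ix,-iu) \right\|_{HS}^2 = \tr\left(e^{-H(\lambda)}\pi_\lambda(-2ix,-2iu)\right) \propto p_1^\lambda(2ix,2iu)$, a Gaussian with exponent $+\lambda(\coth\lambda)(|x|^2+|u|^2)$. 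So the estimate only gives $|f\ast_\lambda p_{1/2}^\lambda(-ix,-iu)| \leq C\, e^{\frac12\lambda(\coth\lambda)(|x|^2+|u|^2)}$, and after multiplying by the prefactor $e^{-\frac12\lambda(\coth\lambda)(x^2+u^2)}$ you obtain only $|g(x,u)| \leq C$, a constant, not the Gaussian decay $|g(x,u)|\leq C\,p_1^\lambda(x,u)$ that Hardy's theorem requires. Your claim that the Mehler-kernel integrals ``produce precisely a constant multiple of $p_1^\lambda(x,u)$'' is therefore not correct with this split.

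The paper avoids the loss by a balanced factorization: it introduces $S_\lambda(x,u) = \pi_\lambda\!\left(\tfrac12(-ix,-iu)\right) e^{-\frac12 H(\lambda)}$, so that $T_\lambda(x,u) := e^{-\frac12 H(\lambda)}\pi_\lambda(-ix,-iu)\, e^{-\frac12 H(\lambda)} = S_\lambda(x,u)^\ast S_\lambda(x,u)$ is a positive trace-class operator; this factorization is valid because $\pi_\lambda$ is a projective representation and is selfadjoint at purely imaginary arguments. Then the ideal property gives $|\tr(T_\lambda M_1)| \leq \|M_1\|\,\tr(T_\lambda) = \|M_1\|\,\tr\!\left(e^{-H(\lambda)}\pi_\lambda(-ix,-iu)\right) \propto p_1^\lambda(ix,iu)$, whose exponent is $\frac14\lambda(\coth\lambda)(|x|^2+|u|^2)$ --- exactly half of what the lopsided split produced --- and the required bound $|g|\leq C\,p_1^\lambda$ follows. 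The device you are missing is the half-argument $\pi_\lambda(\tfrac12(-ix,-iu))$: writing the imaginary-argument $\pi_\lambda$ as a square and distributing its two halves symmetrically between the two Hermite semigroup factors is what yields the sharp Gaussian.
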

\begin{proof} From the relation $ \pi_\lambda(f \ast_\lambda p_{1/2}^\lambda) = e^{-\frac{1}{2}H(\lambda)} M_1  e^{-\frac{1}{2}H(\lambda)} $ and the inversion formula for the Weyl transform we obtain
$$ f \ast_\lambda p_{1/2}^\lambda(-ix,-iu) = (2\pi)^{-n}|\lambda|^n \tr \left(\pi_\lambda(-ix,-iu) e^{-\frac{1}{2}H(\lambda)} M_1  e^{-\frac{1}{2}H(\lambda)}\right) .$$
We use the fact that the space  of trace class operators on $ L^2(\R^n) $ is a two sided ideal inside $ B(L^2(\R^n)),$ that is, for any trace class operator $ T $ and a bounded linear operator $ M $ we have 
$ |\tr( TM) | \leq \|M\|_{Op}\, \|T\|_1 = \|M\|_{Op} \, \tr (|T|).$ Observe that as $ \tr(AB) = \tr(BA) ,$
$$ \tr \left( e^{-\frac{1}{2}H(\lambda)} \pi_\lambda(-ix,-iu) e^{-\frac{1}{2}H(\lambda)} M_1 \right) = \tr \left( T_\lambda(x,u) M_1 \right) $$ 
and we can write  $ T_\lambda(x,u) =  S_\lambda(x,u)^\ast S_\lambda(x,u) $ by defining $ S_\lambda(x,u) = \pi_\lambda(\frac{1}{2}(-ix,-iu)) e^{-\frac{1}{2}H(\lambda)} .$  This  is a consequence of the fact that $ \pi_\lambda(x,u)  $ is a projective representation of $ \R^{2n} $ and $ \pi_\lambda(ix,iu) $ are selfadjoint. Thus $ T_\lambda(x,y) $ is positive and hence by the ideal property recalled above, we get (with $ C = \|M_1\|_{Op}$),
$$ | f \ast_\lambda p_{1/2}^\lambda(-ix,-iu)| \leq C \, \tr \left(\pi_\lambda(-ix,-iu) e^{-H(\lambda)}\right) = C\, p_1^\lambda(ix,iu).$$
Recalling the definition of $ g $ we get the estimate
$$ |g(x,u)| \leq  C \, e^{-\frac{\lambda}{2}(\coth \lambda)(x^2+u^2)} \, p_1^\lambda(-ix,-iu) = C  \, p_1^\lambda(x,u). $$
We  now show that under the extra assumption that $ M_2 $ is a function of $ H(\lambda) ,$  the  Weyl transform of the function $ g $ satisfies the required estimate. This is indeed very easy to check. As $ M_2 $ commutes with $ H(\lambda) $ we have $ \pi_\lambda(g) = M_2 e^{-H(\lambda)} $ and  hence for any $ \psi \in L^2(\R^n),$
$$ \langle \pi_\lambda(g)^\ast \pi_\lambda(g) \psi, \psi \rangle = \langle M_2^\ast M_2 e^{-H(\lambda)} \psi, e^{-H(\lambda)} \psi \rangle \leq \|M_2\|^2 \, \langle e^{-2H(\lambda)}\psi, \psi \rangle $$
which proves the required estimate. 
\end{proof}

As $ \pi_\lambda(U_\lambda f ) = e^{-\frac{1}{2}H(\lambda)} m_2(H(\lambda))  $ it follows that $ U_\lambda f $ is radial. Therefore, $ f $ is also radial since $ U_\lambda $ is essentially the Fourier transform. This means that $ M_1 = m_1(H(\lambda)) $ for some $ m _1 \in L^\infty(\R)$ and hence  $ \varphi(x,u) = G_\lambda(M_1)(x,u) $ is radial on $ \R^{2n}.$  Conversely, if the restriction of $ \varphi $ to $ \R^{2n} $ is radial, then $ M_1 $ is a function of $ H(\lambda) $ and we have the required estimates on $ g $ and $ \pi_\lambda(g) .$ Thus Theorem \ref{thm:uncertainty-twisted-fock-spaces} is proved in the particular case when $ \varphi $ is radial. \\ 

In order to deal with the general case we look at the action of the group $ K = O(2n,\R) \cap Sp(n,\R) $ on the twisted Fock space leading to a decomposition of the operators $ S_\varphi.$ Given  $ \sigma \in U(n) $ let us write $ \sigma = \alpha+i\beta $ where $ \alpha $ and $ \beta $ are real $ n \times n $ matrices. Then we can identity $ K $ with $U(n) $ and the action of $ \sigma $ on $ \R^{2n} $ is given by $ \sigma \cdot (x, u) = (\alpha x-\beta u,  \beta x+\alpha u).$ This action of $ U(n) $ can be naturally extended to $ \C^{2n} $ be defining $ \sigma \cdot (z, w) = (\alpha z-\beta w, \beta z+\alpha w).$ We record the following properties of this action.

\begin{lem} \label{lem:action-U(n)-bilinear-forms} 
The action of $ U(n) $ on $ \C^{2n} $ leaves the following bilinear forms invariant: 
(i) $B_1((z,w),(a,b)) = (z \cdot \bar{a}+w \cdot \bar{b}) $ and (ii) $B_2((z,w),(a,b)) = (w \cdot \bar{a}-z \cdot \bar{b}) .$
We also note that this action leaves the bilinear form $  Q(z,w) = \Im (z \cdot \bar{w}) $ on $ \C^{n} $ invariant.
\end{lem}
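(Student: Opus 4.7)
The plan is to recognise that the two bilinear forms in (i) and (ii) are simply the real and imaginary parts of a single sesquilinear form on $\C^n$, and that under a natural change of variables the $U(n)$-action on $\C^{2n}$ becomes ordinary complex matrix multiplication on $\C^n$. Concretely, for $(z,w),(a,b) \in \C^{2n}$ a direct expansion gives
\begin{equation*}
B_1((z,w),(a,b)) + i\, B_2((z,w),(a,b)) = (z + iw) \cdot \overline{(a + ib)},
\end{equation*}
where the right-hand side is the standard Hermitian pairing on $\C^n$ (with $\cdot$ denoting the bilinear dot product, no conjugation).

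Next, I would check that under $\sigma \cdot (z,w) = (\alpha z - \beta w,\, \beta z + \alpha w)$ with $\sigma = \alpha + i\beta$, the complex combination $z + iw$ transforms by multiplication by $\sigma$:
\begin{equation*}
(\alpha z - \beta w) + i(\beta z + \alpha w) = (\alpha + i\beta)(z + iw) = \sigma(z+iw).
\end{equation*}
This is immediate since $\alpha,\beta$ are real so they commute through $i$. Consequently, applying $\sigma$ to $(z,w)$ and to $(a,b)$ transforms $B_1 + iB_2$ into $\sigma(z+iw) \cdot \overline{\sigma(a+ib)}$. The defining property of $U(n)$, namely $\sigma^{\ast}\sigma = I$, is exactly the statement $\sigma v \cdot \overline{\sigma u} = v \cdot \bar u$ for all $v,u \in \C^n$. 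Applied with $v = z+iw$ and $u = a+ib$, this yields invariance of $B_1 + iB_2$, and separating real and imaginary parts gives both (i) and (ii) in one stroke.

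The final assertion about $Q(z,w) = \Im(z \cdot \bar w)$ on $\C^n$ is the same observation in a simpler guise: $\sigma \in U(n)$ acts on $\C^n$ by complex matrix multiplication and preserves the Hermitian pairing $v \cdot \bar u$, hence also preserves its imaginary part. There is no real obstacle in this lemma; it is a bookkeeping verification, and the only thing to be careful about is keeping the bilinear pairing $\cdot$ distinct from the Hermitian pairing $v \cdot \bar u$, as well as noting that the two real identities encoded in $\sigma^{\ast}\sigma = I$ (namely $\alpha^T\alpha + \beta^T\beta = I$ and $\alpha^T\beta = \beta^T\alpha$) are exactly what is needed if one prefers instead to verify (i) and (ii) by brute-force expansion of the real bilinear forms and collection of the four coefficients $z\cdot \bar a,\, z\cdot \bar b,\, w\cdot \bar a,\, w\cdot \bar b$.
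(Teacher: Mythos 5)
The paper states this lemma without a proof, so there is no argument to compare against; I will simply assess your proposal. Your complexification is correct: the identity $B_1 + iB_2 = (z+iw)\cdot\overline{(a+ib)}$ checks out, as does the transformation rule $(\alpha z - \beta w) + i(\beta z + \alpha w) = \sigma(z+iw)$, so the invariance of $B_1+iB_2$ reduces cleanly to $\sigma^T\bar\sigma = I$, which is indeed equivalent to $\sigma^\ast\sigma = I$. The observation about $Q(z,w) = \Im(z\cdot\bar w)$ is also correct as stated.

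However, the step \emph{``separating real and imaginary parts gives both (i) and (ii) in one stroke''} has a genuine (if small) gap. Since $z,w,a,b$ all range over $\C^n$, the forms $B_1$ and $B_2$ are themselves complex-valued, so $B_1$ and $B_2$ are \emph{not} the real and imaginary parts of $B_1 + iB_2$. Invariance of the single complex scalar $B_1 + iB_2$ does not, by itself, imply invariance of $B_1$ and $B_2$ separately (one could have $B_1\mapsto B_1 + iC$ and $B_2 \mapsto B_2 - C$ for some form $C$). Two quick fixes exist. One is to also note that $B_1 - iB_2 = (z-iw)\cdot\overline{(a-ib)}$, that $z - iw \mapsto \bar\sigma(z-iw)$ under the action, and that $\bar\sigma \in U(n)$ whenever $\sigma \in U(n)$; hence $B_1 - iB_2$ is also invariant, and then $B_1 = \tfrac12\big[(B_1+iB_2)+(B_1-iB_2)\big]$ and $B_2 = \tfrac{1}{2i}\big[(B_1+iB_2)-(B_1-iB_2)\big]$ finish the job. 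The other is to observe that both sides of the invariance identity are linear in $(z,w)$ and antilinear in $(a,b)$, so it suffices to verify them for $(z,w),(a,b) \in \R^{2n}$, where $B_1$ and $B_2$ are real-valued and your ``separating real and imaginary parts'' becomes literal. Either patch is a one-liner and then the proposal is complete; your closing remark about $\alpha^T\alpha + \beta^T\beta = I$ and $\alpha^T\beta = \beta^T\alpha$ is the brute-force version of the same computation and is also consistent with the paper's framing of $K = O(2n,\R)\cap Sp(n,\R)$.
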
 

For each $\sigma \in U(n) $ we consider the operator $ R_\sigma F(z,w) =  F(\sigma^{-1} \cdot (z,w)) $ on $ \mathcal{F}^\lambda(\C^{2n}).$ In view of Lemma \ref{lem:action-U(n)-bilinear-forms}, the weight function $ w_\lambda(z,w) $ is invariant under the action of $ U(n) $ and hence $ R_\sigma F \in \mathcal{F}^\lambda(\C^{2n}) $ which preserves the norm and so it defines a unitary operator. We also note that in view of Lemma \ref{lem:action-U(n)-bilinear-forms} and the definition of $ S_\varphi $ it follows that $ R_\sigma \circ S_\varphi \circ R_\sigma^\ast = S_{R_{\sigma}\varphi}.$ Thus we see that the radialisation $ S_\varphi^\#$ of the operator  $ S_\varphi $ is of the form $ S_{\varphi^\#} $ where
$$ S_\varphi^\#  = \int_{U(n)} R_\sigma \circ S_\varphi \circ R_\sigma^\ast  \, d\sigma  = S_{\varphi^\#}, \quad \textup{with } \, \, \varphi^\# = \int_{U(n)} R_\sigma \varphi \, d\sigma .$$
If $ \varphi = G_{\lambda}M,$ define $ f $ by the relation $ \pi_\lambda(f) = e^{-\frac{1}{2}H(\lambda)} M.$ Then it follows that $ \varphi^\# = G_{\lambda}M^\# $ where $ M^\# $ is defined by the relation $ \pi_\lambda(f^\#) = e^{-\frac{1}{2}H(\lambda)} M^\#.$ This is a consequence of the easily verified relation $ R_\sigma( f \ast_\lambda g) = R_\sigma f \ast_\lambda R_\sigma g.$ Thus we see that the functions $ \varphi $ which are radial (that is, invariant under the action of $ U(n)$) correspond to functions $ f $ on $ \R^{2n} $ (as given above) that are radial. For functions $ \varphi $ coming from $ \mathcal{A}^\lambda(\C^{2n}) $ this translates into the fact that the operator $ M $ appearing in the relation $ \pi_\lambda(f) = e^{-\frac{1}{2}H(\lambda)} M $ is a function of $ H(\lambda).$ Thus, in Proposition \ref{prop:uncertainty-pointwise-bound} we have taken care of the case when $ \varphi $ is radial.\\

To deal with the general case we express $ \varphi $ as a superposition of $ \varphi_\delta $ indexed by class one representations of $ K.$ For each $ \delta $ we will show that both $ S_{\varphi_\delta} $ and $ \widetilde{S}_{\varphi_\delta} $ are bounded on $ \mathcal{F}^\lambda(\C^{2n}) $ and consequently $ \varphi_\delta = 0$ for each $ \delta $ except for the trivial representation which corresponds to $ \varphi^\# $  which is already treated in the Proposition \ref{prop:uncertainty-pointwise-bound}. \\

With notations as in \Cref{subsec-prelim-operator-sph-harmonics}, for each class one representation $ \delta $ of $ K = U(n) $ let $ \chi_\delta $ stand for its character. We identify $ L^2(S^{2n-1}) $ with $ L^2(K/M) $ where $ M  \subset K $ is the isotropic subgroup fixing the coordinate vector $ e_1 \in \C^n.$ Note that we can identity $ M $ with $ U(n-1) $ and functions on $ S^{2n-1} $ are in one to one correspondence with right $ M $ invariant functions on $K.$  In view of Peter-Weyl theorem for the compact symmetric space $ K/M $ we obtain
$$ \varphi(z,w) = \sum_{\delta \in \widehat{K_0}} \varphi_\delta(z,w), $$ 
where 
$$ \varphi_\delta(z,w) = \int_{U(n)} R_\sigma \varphi (z,w) \, \chi_\delta(\sigma^{-1}) \, d\sigma .$$
In view of the relation $ R_\sigma \circ S_\varphi \circ R_\sigma^\ast = S_{R_{\sigma}\varphi} $ it follows that 
$$ S_{\varphi_\delta} = \int_{U(n)} R_\sigma \circ S_\varphi \circ R_\sigma^\ast \,\,\, \chi_\delta(\sigma^{-1}) \, d\sigma .$$
Consequently, boundedness of $ S_\varphi $ implies boundedness of $ S_{\varphi_\delta} $ for every $ \delta \in \widehat{K_0}.$ Moreover,  it follows directly from the definition of $U^*$ that $(U^* \varphi)_\delta = U^* \varphi_\delta,$ and therefore the boundedness of $ S_{(U^* \varphi)_\delta}$ follows from that of $S_{U^* \varphi}.$
\\

We show that the simultaneous boundedness of $ S_{\varphi_\delta} $ and  $ S_{(U^* \varphi)_\delta}$ forces $\varphi_\delta $ to be zero whenever $ \delta $ is non-trivial. This will complete the proof of Theorem \ref{thm:uncertainty-twisted-fock-spaces}. \\

Recall that when $S_\varphi$ is bounded, we have  a function $ f \in L^2(\R^{2n}) $ satisfying $ \pi_\lambda(f) = e^{-\frac{1}{2}H(\lambda)} M_1 $ with $ M_1 \in B(L^2(\R^n)),$ such that  
$$\varphi (z,w) = e^{\frac{\lambda}{4}(\coth 2 \lambda)(z^2+w^2)} \, f \ast_\lambda p_{1/2}^\lambda (z,w) $$ 
and, as earlier, let us define $g =  U_\lambda f \ast_\lambda p_{1/2}^\lambda .$ It then follows that 
$$\varphi_\delta (z,w) = e^{\frac{\lambda}{4}(\coth 2 \lambda)(z^2+w^2)} \, f_\delta \ast_\lambda p_{1/2}^\lambda (z,w) .$$ 
Moreover, as the Euclidean Fourier transform commutes with rotations, it follows that $(U_\lambda f)_\delta = U_\lambda f_\delta.$ As a consequence,   $g_\delta =  U_\lambda f_\delta \ast_\lambda p_{1/2}^\lambda$ and hence the estimate on $ g $ proved in Proposition \ref{prop:uncertainty-pointwise-bound} yields the same estimate on $ g_\delta.$  In order to calculate $ \pi_\lambda(g_\delta)$ we proceed as follows. \\

For any $ h \in L^2(\R^{2n}), \pi_\lambda(R_\sigma h) = \mu_\lambda(\sigma) \pi_\lambda(h) \mu_\lambda(\sigma)^* $ and hence making use of the fact that $ \mu_\lambda(\sigma) $ commutes with $ e^{-tH(\lambda)} $ for any $ \sigma \in U(n) ,$ an easy calculation shows  that 
$$ \pi_\lambda(g_\delta) = e^{-\frac{1}{2}H(\lambda)} M_2^\delta \,e^{-\frac{1}{2}H(\lambda)}$$
where we have defined $ M_2^\delta $ by the equation
$$ M_2^\delta = \int_{U(n)} \mu_\lambda(\sigma) M_2 \,   \mu_\lambda(\sigma)^*\, \chi_\delta(\sigma^{-1}) \, d\sigma .$$ 
We now make use of the results stated in Subsection \ref{subsec-prelim-operator-sph-harmonics}. Expanding $ M_2^\delta $ in terms of  $ S_{j,k}^\delta $ we have
$$ M_2^\delta = \sum_{j=1}^{d(\delta)} W_\lambda(P_j^\delta) \, m_j (H(\lambda))$$
where $m_j \in L^\infty (\R)$ for each $1 \leq j \leq d(\delta)$. We thus have
$$ \pi_\lambda(g_\delta) = \sum_{j=1}^{d(\delta)} e^{-\frac{1}{2}H(\lambda)} \,W_\lambda(P_j^\delta) \, e^{-\frac{1}{2}H(\lambda)}\, m_j (H(\lambda)). $$ 
By defining $ V_j^\delta(\lambda) = [e^{-\frac{1}{2}H(\lambda)}, W_\lambda(P_j^\delta)] \,e^{\frac{1}{2}H(\lambda)}$ we can write the above as 
$$ \pi_\lambda(g_\delta) = \sum_{j=1}^{d(\delta)} \left( W_\lambda(P_j^\delta) + V_j^\delta(\lambda)\right) \, m_j (H(\lambda))\,e^{-H(\lambda)}. $$ 
In view of a result of Douglas \cite[Theorem 1]{Douglas-factorisation-PAMS}, it is enough to show that the linear operator $ \left( W_\lambda(P_j^\delta) + V_j^\delta(\lambda)\right) H(\lambda)^{-r} $ is bounded on $ L^2(\R^n) $  for some $ r \in \mathbb{N}.$ \\ 

Let $ \delta= \delta(p,q)$  and consider $ P^\delta(z) = z_1^p \, \bar{z}_2^q.$ Since the space $\mathcal{H}_{p,q}$ is irreducible under the action of $U(n)$, it follows that for each $1 \leq j \leq d(\delta)$, there exist $\sigma_{j,k} \in U(n)$ and constants $ c_{j,k} $ such that 
$$ P_j^\delta = \sum_{k=1}^{m_j} c_{j,k}\, R_{\sigma_{j,k}} P^\delta, $$ 
and therefore 
$$ W_\lambda(P_j^\delta) = \sum_{k=1}^{m_j} c_{j,k}\, \mu_{\lambda}(\sigma_{j,k}) W_\lambda ( P^\delta) \mu_{\lambda}(\sigma_{j,k})^\ast. $$ 
In view of the above, as $ \mu_\lambda(\sigma)$ commutes with $ e^{-\frac{1}{2}H(\lambda)}, $ it is enough to show the boundedness of $ \left( W_\lambda(P^\delta) + V^\delta(\lambda)\right) H(\lambda)^{-r} $ where $V^\delta(\lambda)$ is defined the same way as $V_j^\delta(\lambda).$\\

The boundedness of $ W_\lambda(P^\delta)H(\lambda)^{-r} $ with $ r = (p+q)/2$ is easy to see. Indeed, for $ \lambda >0,$ we have $  W_\lambda(P^\delta) = A_2^\ast(\lambda)^q A_1(\lambda)^p$ and hence 
$$W_\lambda(P^\delta)H(\lambda)^{-(p+q)/2} \Phi_\alpha^\lambda =  (2|\alpha|+n)^{-(p+q)/2} A_2^\ast(\lambda)^q A_1(\lambda)^p \Phi_\alpha^\lambda .$$
Recalling the action of the creation and annihilation operators on Hermite functions, viz.,
$$A_2^\ast(\lambda) \Phi_\alpha^\lambda = \left( \left( 2 \alpha_2 + 2 \right) |\lambda| \right)^{1/2} \Phi_{\alpha + e_2}^\lambda, \quad A_1(\lambda) \Phi_\alpha^\lambda = \left( \left( 2 \alpha_1 \right) |\lambda| \right)^{1/2} \Phi_{\alpha - e_1}^\lambda$$
we see that $ A_2^\ast(\lambda)^q A_1(\lambda)^p \Phi_\alpha^\lambda = c_{\alpha}^\delta(\lambda)\, \Phi_{\alpha - pe_1 + qe_2}^\lambda $ with $ | c_{\alpha}^\delta(\lambda)| \leq c \, \left( \left( (2|\alpha|+n) \right) |\lambda| \right)^{(p+q)/2}.$ This 
proves the boundedness of $ W_\lambda(P^\delta) H(\lambda)^{-(p+q)/2}.$ A similar argument also proves the boundedness of $V^\delta(\lambda) H(\lambda)^{-(p+q)/2}. $ Hence we have proved the estimate 
$$ \pi_\lambda(g_\delta)^\ast \pi_\lambda(g_\delta) \leq C\, H(\lambda)^{p+q}\, e^{-2H(\lambda)}.$$
This completes the proof of the uncertainty principle as stated in \Cref{thm:uncertainty-twisted-fock-spaces}. \\


\section*{Acknowledgements}
This work was initiated when the second author (ST) visited IISER, Bhopal in March 2023 as Adjunct Faculty and completed in June 2023 when the first author (RG) visited IISc, Bangalore. RG and ST wish to thank their host institutes for the hospitality. ST was supported in part by DST (J. C. Bose Fellowship) and INSA.\\


\providecommand{\bysame}{\leavevmode\hbox to3em{\hrulefill}\thinspace}
\providecommand{\MR}{\relax\ifhmode\unskip\space\fi MR }
\providecommand{\MRhref}[2]{%
  \href{http://www.ams.org/mathscinet-getitem?mr=#1}{#2}
}
\providecommand{\href}[2]{#2}

\end{document}